\documentclass[10pt]{article}
\usepackage[margin= 1.2in]{geometry}

\usepackage{amsmath}
\usepackage{amssymb}
\usepackage{amsthm}
\usepackage{dsfont}
\usepackage[T1]{fontenc}
\usepackage[mathscr]{eucal}
\usepackage[usenames,dvipsnames]{color}
\usepackage{graphicx}
\usepackage{mathrsfs}
\usepackage{nameref}
\usepackage{pgfplots}
\usepackage{setspace}
\usepackage{textcomp}
\usepackage{tikz}
	\usetikzlibrary{matrix}
\usepackage[normalem]{ulem}
\usepackage{verbatim}

\numberwithin{equation}{section}

\usepackage{stmaryrd}

\title{\Large Anticommutation in the Presentations of Theta-Deformed Spheres}
\author{Benjamin Passer\thanks{Partially supported by National Science Foundation Grants DMS 1300280 and DMS 1363250} \\ Washington University\\ St. Louis, MO 63130 \\ \\bpasser@math.wustl.edu}

\begin{document}


\newcommand{\textapprox}{\raisebox{0.5ex}{\texttildelow}}
\def\polhk#1{\setbox0=\hbox{#1}{\ooalign{\hidewidth\lower1.5ex\hbox{`}\hidewidth\crcr\unhbox0}}}

\def\ba{\begin{aligned}}
\def\ea{\end{aligned}}
\def\be{\begin{equation}}
\def\ee{\end{equation}}
\def\beu{\begin{equation*}}
\def\eeu{\end{equation*}}

\def\C{\mathbb{C}}
\def\D{\mathbb{D}}
\def\R{\mathbb{R}}
\def\Rn{\mathbb{R}^n}
\def\S{\mathbb{S}}
\def\Sn{\mathbb{S}^n}
\def\Z{\mathbb{Z}}
\def\T{\mathbb{T}}
\def\N{\mathbb{N}}
\def\RP{\mathbb{RP}}
\def\Q{\mathbb{Q}}
\def\B{\mathbb{B}}

\def\fR{\mathfrak{R}}

\def\F{\mathfrak{F}}
\def\del{\partial}

\def\!={\neq}
\def\l{\langle}
\def\r{\rangle}

\theoremstyle{definition}

\newtheorem{defn}[equation]{Definition}
\newtheorem{lem}[equation]{Lemma}
\newtheorem{prop}[equation]{Proposition}
\newtheorem{thm}[equation]{Theorem}
\newtheorem{claim}[equation]{Claim}
\newtheorem{ques}[equation]{Question}
\newtheorem{fact}[equation]{Fact}
\newtheorem{axiom}[equation]{Technical Axiom}
\newtheorem{newaxiom}[equation]{New Technical Axiom}
\newtheorem{cor}[equation]{Corollary}
\newtheorem{exam}[equation]{Example}
\newtheorem{conj}[equation]{Conjecture}

\bibliographystyle{plain}
\maketitle

\begin{abstract}
We consider an analogue of the $\theta$-deformed even spheres, modifying the relations demanded of the self-adjoint generator $x$ in the usual presentation. In this analogue, $x$ is given anticommutation relations with all of the other generators, as opposed to being central. Using even $K$-theory, we show that noncommutative Borsuk-Ulam theorems hold for these algebras.
\end{abstract}

\vspace{.2 in}

Keywords: Borsuk-Ulam, anticommutation, noncommutative suspension

\vspace{.2 in}

MSC2010: 46L85

\vspace{.2 in}

\section{Introduction}\label{sec:intro}

When a Rieffel deformation procedure (\cite{ri93}) is applied to the function algebra $C(\S^k)$ of a sphere $\S^k$, the resulting $C^*$-algebra admits a succinct presentation (\cite{na97}, \cite{co01}). 

\begin{defn}
Let $\rho$ be an $n \times n$ matrix with $1$ on the diagonal, all entries unimodular, and $\rho_{jk} = \overline{\rho_{kj}}$. Then $C(\S^{2n-1}_\rho)$ and $C(\S^{2n}_\rho)$ are given by the following $C^*$-presentations.
\beu \ba
C(\S^{2n-1}_\rho) \cong C^*(z_1, \ldots, z_n \hspace{.1 in} &| \hspace{.1 in} z_jz_j^* = z_j^*z_j, \hspace{.1 in} z_kz_j = \rho_{jk} z_j z_k, \hspace{.1 in} z_1z_1^* + \ldots + z_nz_n^* = 1) \\
\\
C(\S^{2n}_\rho) \cong C^*(z_1, \ldots, z_n, x \hspace{.1 in} &| \hspace{.1 in} z_jz_j^* = z_j^*z_j, \hspace{.1 in} x = x^*, \hspace{.1 in} z_kz_j = \rho_{jk} z_j z_k, \hspace{.1 in} xz_j = z_jx, \hspace{.1 in} 
\\ &\phantom{|} \hspace{.1 in} z_1z_1^* + \ldots + z_nz_n^* + x^2 = 1)
\ea \eeu
\end{defn}

The presentation and notation illustrate the mentality suggested by the Gelfand-Naimark theorem, in that we view $C(\S^k_\rho)$ as a (noncommutative) function algebra with coordinate functions labeled $z_j$ or $x$. The noncommutativity relations of $C(\S^{k}_\rho)$ vary in a continuous fashion, and $C(\S^{2n}_\rho)$ is realized as a quotient $C(\S^{2n+1}_\omega) / \langle z_{n+1} - z_{n+1}^* \rangle$, where $\omega$ is such that $z_{n+1}$ is central and $\rho$ is the upper left $n \times n$ submatrix of $\omega$. However, this type of quotient is reasonable (that is, nondegenerate) even when $z_{n+1}$ anticommutes with some of the other $z_j$. We consider below the quotients $C(\S^{2n+1}_\omega)/\langle z_{n+1} - z_{n+1}^* \rangle$ when anticommutation always occurs.

\begin{defn}
Let $\rho$ be an $n \times n$ matrix with $1$ on the diagonal, all entries unimodular, and $\rho_{jk} = \overline{\rho_{kj}}$. Then $\fR^{2n}_\rho$ is defined by the following presentation.
\be\label{eq:antipres} \ba
\fR^{2n}_\rho \cong C^*(z_1, \ldots, z_n, x \hspace{.1 in} | \hspace{.1 in} &z_jz_j^* = z_j^*z_j, \hspace{.1 in} x = x^*, \hspace{.1 in} z_kz_j = \rho_{jk} z_j z_k, \hspace{.1 in} xz_j = -z_jx, \hspace{.1 in} 
\\  &z_1z_1^* + \ldots + z_nz_n^* + x^2 = 1)
\ea \ee
We adopt the convention that $\fR^{2n}$ without a subscript denotes $\fR^{2n}_\rho$ for $\rho$ a matrix of all ones. In this algebra, $z_1, \ldots, z_n$ commute with each other and anticommute with $x$.
\end{defn}

Each $\fR^{2n}_\rho$ may be realized as a noncommutative unreduced suspension (in the sense of \cite{pa15b}, Definition 3.4) of the unital $C^*$-algebra $C(\S^{2n-1}_\rho)$ given by the antipodal map, which is the order two homomorphism that negates each generator of the standard presentation. In general, if $\beta$ generates a $\Z_2$ action on a unital $C^*$-algebra $A$, then
\beu
\Sigma^\beta A := \{f \in C([0, 1], A \rtimes_\beta \Z_2): f(0) \in A, f(1) \in C^*(\Z_2) \}
\eeu
defines the noncommutative unreduced suspension of $(A, \beta, \Z_2)$. When $\beta$ is trivial, $\Sigma^\beta A$ is isomorphic to the unreduced suspension $\Sigma A$; this was considered in \cite{da15} and \cite{pa15b} in pursuit of noncommutative Borsuk-Ulam theory.

\begin{thm}[Borsuk-Ulam]
No continuous, odd maps exist from $\S^k$ to $\S^{k-1}$, and every continuous, odd map on $\S^{k-1}$ is homotopically nontrivial.
\end{thm}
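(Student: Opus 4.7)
The plan is to establish the first assertion using mod-$2$ cohomology of real projective spaces and then derive the second as a short consequence. Suppose for contradiction that $f : \S^k \to \S^{k-1}$ is continuous and odd. Because $f$ intertwines the antipodal actions, it descends to a continuous map $\bar f : \RP^k \to \RP^{k-1}$, and the oddness of $f$ identifies the double cover $\S^k \to \RP^k$ with the $\bar f$-pullback of $\S^{k-1} \to \RP^{k-1}$. Under the correspondence between principal $\Z_2$-bundles on $X$ and classes in $H^1(X;\Z_2)$, this forces $\bar f^*$ to send the generator $w \in H^1(\RP^{k-1};\Z_2)$ to the generator of $H^1(\RP^k;\Z_2)$, both covers being nontrivial. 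Writing each generator as $w$ and using $H^*(\RP^n;\Z_2) \cong \Z_2[w]/(w^{n+1})$, the vanishing $w^k = 0$ in $H^*(\RP^{k-1};\Z_2)$ yields
\[
0 \;=\; \bar f^*(w^k) \;=\; \bar f^*(w)^k \;=\; w^k
\]
in $H^k(\RP^k;\Z_2)$, contradicting $w^k \neq 0$ there. So no continuous odd map $\S^k \to \S^{k-1}$ exists.

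For the second claim, suppose toward contradiction that $g : \S^{k-1} \to \S^{k-1}$ is odd and null-homotopic. Then $g$ extends continuously to a map $h : \B^k \to \S^{k-1}$ from the closed ball. Identify each hemisphere of $\S^k$ with $\B^k$ by orthogonal projection onto the equatorial hyperplane, and define $F : \S^k \to \S^{k-1}$ to equal $h$ on the upper hemisphere and to equal $p \mapsto -h(-p)$ on the lower. The oddness of $g$ forces the two formulas to agree on the equator, producing a continuous odd map $F : \S^k \to \S^{k-1}$. This contradicts the first assertion, so $g$ cannot be null-homotopic.

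The principal obstacle is verifying $\bar f^*(w) = w$ on $H^1$, which rests on the naturality of the classifying map $X \to B\Z_2 = \RP^\infty$: the $\bar f$-pullback of the nontrivial cover $\S^{k-1} \to \RP^{k-1}$ must coincide with the nontrivial cover $\S^k \to \RP^k$, so the classifying maps fit into a diagram which forces $\bar f^*$ to carry generator to generator. Everything else reduces to standard manipulations in the truncated polynomial ring $\Z_2[w]/(w^{n+1})$ and an explicit hemisphere gluing.
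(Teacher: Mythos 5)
The paper states this classical theorem purely as background and offers no proof of it, so there is nothing internal to compare your argument against; the paper's machinery (odd unitaries, $K_1$ of $\theta$-deformed spheres) is aimed at noncommutative generalizations, not at reproving the topological statement. Your proposal is a correct and entirely standard proof: the descent of an odd map $f:\S^k\to\S^{k-1}$ to $\bar f:\RP^k\to\RP^{k-1}$, the identification of $\S^k\to\RP^k$ with the pullback cover forcing $\bar f^*(w)=w$ on $H^1(\cdot\,;\Z_2)$, and the contradiction $0=\bar f^*(w^k)=w^k\neq 0$ in $\Z_2[w]/(w^{k+1})$ are all sound, as is the hemisphere-gluing reduction of the second assertion to the first (the equator check $-h(-p)=-g(-p)=g(p)$ is exactly where oddness is used). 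The only cosmetic gap is the degenerate case $k=1$: there $H^1(\RP^0;\Z_2)=0$ and $\S^0\to\RP^0$ is the trivial cover, so the ``generator to generator'' phrasing does not literally apply; one instead notes that the pullback of a trivial cover is trivial while $\S^1\to\RP^1$ is connected, or simply that a continuous map from connected $\S^1$ to $\S^0$ is constant and hence not odd. With that one-line remark added, the proof is complete. It is worth noting, in the spirit of the paper, that your cohomological invariant ($w^k$ in mod-$2$ cohomology of $\RP^k$) plays the same structural role that the paper's $K$-theoretic invariants (the parity of $[U]_{K_1}$ for odd unitaries, or the class of $P$ with $\alpha(P)=I-P$ in $K_0$) play in the noncommutative setting: both detect the nontriviality of the $\Z_2$-equivariant structure through a ring- or group-valued obstruction that cannot drop dimension.
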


Noncommutative Borsuk-Ulam theorems (\cite{ta12}, \cite{ya13}, \cite{ba15}, \cite{pa15a}, \cite{da15}, \cite{pa15b}, \cite{ch16}) arise when a $C^*$-algebraic theorem generalizes the result of passing the traditional Borsuk-Ulam theorem, or one of its various restatements and extensions, through Gelfand-Naimark duality. In short, these theorems describe when an equivariant map between two related $C^*$-algebras cannot exist, or when an equivariant self-map must behave nontrivially with respect to some invariant. The $\theta$-deformed odd spheres were considered for this purpose in \cite{pa15a} with $\Z_k$ rotation actions; the $\mathbb{Z}_2$ case is repeated below.

\begin{thm}\label{thm:oddBU}
Let $\alpha$ denote the antipodal $\Z_2$-action on any $C(\S^{k}_\rho)$, which negates each generator of the standard presentation. Then any $\alpha$-equivariant, unital $*$-homomorphism from $C(\S^{2n-1}_\rho)$ to $C(\S^{2n-1}_\omega)$ must induce a nontrivial map on $K_1 \cong \Z$. Consequently, for any $k$, there are no $\alpha$-equivariant, unital $*$-homomorphisms from $C(\S^k_\gamma)$ to $C(\S^{k+1}_\delta)$.
\end{thm}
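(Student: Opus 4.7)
The plan is to first establish the $K_1$-nontriviality claim for odd-to-odd equivariant maps, then derive the nonexistence assertion by a parity reduction. Since Rieffel deformation preserves $K$-theory, $K_1(C(\S^{2n-1}_\rho)) \cong K_1(C(\S^{2n-1})) \cong \Z$ with a canonical generator $[u_n]$ realized by a matrix unitary built from $z_1,\ldots,z_n$ (so $[u_1] = [z_1]$, and higher $n$ are handled by an iterated noncommutative Bott/Clifford-type formula), while $K_1(C(\S^{2n}_\rho)) = 0$.

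For the core odd-to-odd case I would analyze $\phi$ via the $\Z_2$-crossed product. An $\alpha$-equivariant unital $\phi$ extends to a $*$-homomorphism $\phi \rtimes \Z_2$ between the crossed products $C(\S^{2n-1}_\bullet)\rtimes_\alpha \Z_2$, which are noncommutative analogues of $C(\RP^{2n-1})$. The six-term exact sequence relating $K_\ast$ of the base and of the crossed product (e.g.\ the sequence arising from the conditional expectation onto $C(\S^{2n-1}_\bullet)$, or an equivariant index argument) connects $[u_n]$ to a specific class downstairs, and tracing this class through the connecting map pins $\phi_\ast([u_n])$ to an odd multiple of $[u_n]$. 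This mirrors the classical assertion that an odd self-map of $\S^{2n-1}$ has odd degree; in particular $\phi_\ast$ is nonzero on $K_1$. This is essentially the route taken in \cite{pa15a}.

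The nonexistence statement then follows by two reductions. If $k = 2n-1$ and $\phi\colon C(\S^{2n-1}_\gamma) \to C(\S^{2n}_\delta)$ were $\alpha$-equivariant, composing with the equivariant surjection $C(\S^{2n}_\delta) \twoheadrightarrow C(\S^{2n-1}_\delta)$ given by $x\mapsto 0$ would yield an equivariant $*$-homomorphism $C(\S^{2n-1}_\gamma)\to C(\S^{2n-1}_\delta)$ factoring through $C(\S^{2n}_\delta)$; since $K_1(C(\S^{2n}_\delta))=0$, this composite is trivial on $K_1$, contradicting the core case. If $k=2n$ and $\phi\colon C(\S^{2n}_\gamma)\to C(\S^{2n+1}_\delta)$ were equivariant, precomposing with the equivariant quotient $C(\S^{2n+1}_{\tilde\gamma}) \twoheadrightarrow C(\S^{2n+1}_{\tilde\gamma})/\langle z_{n+1}-z_{n+1}^*\rangle = C(\S^{2n}_\gamma)$ (where $\tilde\gamma$ makes $z_{n+1}$ central, so the quotient sends $z_{n+1}\mapsto x$) produces an equivariant $C(\S^{2n+1}_{\tilde\gamma}) \to C(\S^{2n+1}_\delta)$ factoring through $C(\S^{2n}_\gamma)$; since $K_1(C(\S^{2n}_\gamma))=0$, this composite is again trivial on $K_1$, contradicting the core case.

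The main obstacle is the core odd-to-odd step: absent equivariance, $\phi_\ast$ on $K_1$ can genuinely vanish, so the argument must convert $\alpha$-equivariance into mod-$2$ rigidity of the induced map. This is the essential Borsuk-Ulam content, and it is where \cite{pa15a} does real work. The $\theta$-parameters enter only superficially because $K$-theory is deformation-invariant, but identifying concrete representatives for $[u_n]$ inside the deformed algebras (so that one can see the $\alpha$-action on $K_1$ explicitly) still requires some care.
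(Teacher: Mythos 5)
Your two reduction steps for the nonexistence claim are correct and are exactly how this deduction is made from the odd-to-odd case: compose with the equivariant quotient $x \mapsto 0$ when $k$ is odd, and precompose with the equivariant quotient $C(\S^{2n+1}_{\tilde{\gamma}}) \twoheadrightarrow C(\S^{2n}_\gamma)$ when $k$ is even, then use $K_1(C(\S^{2n}_\bullet)) = 0$. The gap is in the core odd-to-odd step, which carries the entire content of the theorem. You assert that equivariance forces $\phi_*$ to send the generator to an odd multiple of the generator, but you handle this with one sentence --- ``tracing this class through the connecting map pins $\phi_*([u_n])$ to an odd multiple'' --- without performing any computation. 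That step is not routine: to extract mod-$2$ information from the six-term sequence for $C(\S^{2n-1}_\rho) \rtimes_\alpha \Z_2$ you would need to know, for instance, that the image of $K_1$ of the crossed product (equivalently, of the fixed-point algebra) inside $K_1(C(\S^{2n-1}_\rho)) \cong \Z$ is exactly $2\Z$; that statement is equivalent to the hard part of the theorem, not a consequence of formal naturality of the sequence.

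The mechanism the paper actually records (Lemma \ref{lem:oddevenK_1}, imported from \cite{pa15a}) is a parity dichotomy at the level of unitaries rather than a diagram chase: any unitary matrix over $C(\S^{2n-1}_\rho)$ with even entries represents an even integer in $K_1$, while the canonical generator $Z_\rho(n)$ has odd ($*$-monomial) entries, dimension $2^{n-1}$, and class $1$; since the product of two same-size odd unitaries is even, any odd unitary of dimension $(2m+1)2^{n-1}$ must have odd $K_1$ class. An equivariant unital $\phi$ sends $Z_\rho(n)$ to an odd unitary of dimension $2^{n-1}$, whose class is therefore odd and in particular nonzero. Your sketch never identifies or proves this even/odd dichotomy, which is where the real work lives; until you supply it (or an honest crossed-product computation establishing the same $2\Z$ image), the core case is assumed rather than proved.
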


The $K$-theory of $\theta$-deformed spheres exactly matches that of the commutative case, as Rieffel deformation preserves $K$-theory (\cite{ri93b}), but there are more detailed descriptions (\cite{na97}, \cite{pe13}). 
\beu
\begin{array}{cc} K_0(C(\S^{2n-1}_\rho)) \cong \Z & K_1(C(\S^{2n-1}_\rho)) \cong \Z \\ \\ K_0(C(\S^{2n}_\rho)) \cong \Z \oplus \Z & K_1(C(\S^{2n}_\rho)) \cong \{0\} \end{array}
\eeu
On the other hand, the anticommutation present in $\fR^{2n}_\rho$ is not a matter that can be resolved continuously, as the generator $x$ is self-adjoint. Despite this obstruction, a noncommutative Borsuk-Ulam theorem does hold.

\begin{thm}
Let $A_{2n}, B_{2n}$ be $C^*$-algebras of the type $\fR^{2n}_\rho$ or $C(\S^{2n}_\rho)$ (the types may differ or use different matrices $\rho$), both of which are equipped with an antipodal map that negates each generator. Both algebras have $K_0 \cong \Z \oplus \Z$, with the first summand generated by the trivial projection $1$. Any equivariant, unital $*$-homomorphism from $A_{2n}$ to $B_{2n}$ must induce a $K_0$ map that is nontrivial on the second summand.
\end{thm}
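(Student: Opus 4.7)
The strategy is to reduce the statement to the odd-sphere Borsuk-Ulam Theorem~\ref{thm:oddBU} via the noncommutative unreduced suspension structure that both $A_{2n}$ and $B_{2n}$ inherit from $C(\S^{2n-1}_\rho)$. Using the short exact sequence
\beu
0 \to C_0((0,1), A \rtimes_\beta \Z_2) \to \Sigma^\beta A \to A \oplus C^*(\Z_2) \to 0
\eeu
obtained by evaluating at the two endpoints of the parameter interval, with $A = C(\S^{2n-1}_\rho)$ and $\beta$ equal to either the trivial or the antipodal action, I would first derive the expected decomposition $K_0(A_{2n}) \cong \Z \oplus \Z$. The six-term sequence combined with the computation of $K_*(A \rtimes_\beta \Z_2)$ (easy in the trivial case, and in the antipodal case Morita-equivalent to a $\theta$-deformed real projective odd sphere) should isolate the $[1]$-summand pulled back from $C^*(\Z_2)$ and a second summand arising, via the boundary map, from a $K_1$-generator of $A$.

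With this identification in place, I would exploit naturality: any unital $*$-homomorphism $\phi \colon A_{2n} \to B_{2n}$ induces a morphism between the corresponding six-term sequences, so the induced $K_0$-map on the second summand corresponds to an induced endomorphism of $K_1(C(\S^{2n-1}_\rho)) \cong \Z$. I would argue that the equivariance of $\phi$, together with the compatibility between the $\Z_2$-action and the suspension coordinate, forces this induced $K_1$ map to coincide with (or at worst factor through) the $K_1$ map of an equivariant $*$-homomorphism $C(\S^{2n-1}_\rho) \to C(\S^{2n-1}_\omega)$. Theorem~\ref{thm:oddBU} then forces this induced $K_1$ map to be nonzero, and transporting the conclusion back through the boundary identification gives the nontriviality of $\phi_*$ on the second $K_0$-summand.

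The main obstacle I expect is this equivariance-transfer step, especially for $\fR^{2n}_\rho$. The anticommutation $xz_j = -z_jx$ couples the suspension coordinate $x$ to the base generators, so the antipodal $\Z_2$-action does not decompose as a trivial tensor action on the suspension, and $\phi$ need not restrict to any honest base-level homomorphism $C(\S^{2n-1}_\rho) \to C(\S^{2n-1}_\omega)$. To sidestep this, I anticipate working directly at the $K$-theoretic level: choosing specific unitary representatives of the $K_1$-generator of the base, tracking their $\phi$-images through the boundary map in the suspension sequence, and verifying equivariance of the resulting $K_1$-endomorphism by a spectral or homotopy argument. This is precisely where the anticommutation hypothesis makes the argument genuinely distinct from the commutative-suspension analysis of~\cite{da15,pa15b}.
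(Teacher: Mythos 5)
There is a genuine gap at the heart of your plan: the naturality step fails. A unital $*$-homomorphism $\phi\colon A_{2n} \to B_{2n}$ has no reason to carry the suspension ideal $C_0((0,1), A \rtimes_\beta \Z_2)$ of the domain into the corresponding ideal of the codomain, so it does not induce a morphism of six-term exact sequences, and in particular it induces no homomorphism (equivariant or otherwise) between the base algebras $C(\S^{2n-1}_\rho)$ and $C(\S^{2n-1}_\omega)$ to which Theorem \ref{thm:oddBU} could be applied. You flag this obstruction for $\fR^{2n}_\rho$, but it is equally fatal for $C(\S^{2n}_\rho)$, and ``tracking images through the boundary map'' cannot repair it, because the boundary maps of the two sequences are simply not intertwined by $\phi_*$. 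The paper avoids this entirely by making the argument about elements rather than maps: equivariance of $\phi$ enters only to guarantee that the image of the distinguished $2^n$-dimensional projection ($P_\rho(n)$ or $P^\prime_\rho(n)$) again satisfies $\alpha(P) = I - P$, and the real content is the statement that \emph{any} projection of dimension $(2m+1)2^n$ with $\alpha(P) = I - P$ is $K_0$-nontrivial (Theorem \ref{thm:K0oddprojections} and its $\fR$ analogue); the corollary about homomorphisms is then a two-line consequence.

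Your proposal also underestimates how differently the two cases must be handled. For $C(\S^{2n}_\omega)$ the paper goes \emph{up} a dimension: an odd projection $P$ yields an odd self-adjoint unitary $B = 2P - I$, which is extended over the disk to an odd unitary $\widetilde{B}$ over $C(\S^{2n+1}_\rho)$, and Lemma \ref{lem:oddevenK_1} --- a statement about unitary matrices with odd entries, not about homomorphisms --- gives the contradiction. For $\fR^{2n}_\omega$ the paper goes \emph{down} a dimension: an odd self-adjoint unitary $F(t) + G(t)\delta$ over the crossed product produces a path of unitaries $U(t) = G(t) + iF(t)$ over $C(\S^{2n-1})$ satisfying $\alpha(U) = U^*$, and the contradiction comes from a new mod-$2$ invariant $[U^+]_{K_1} \bmod 2$ developed in Theorems \ref{thm:gammaustar} and \ref{thm:unexpectedZ2}; this machinery cannot be replaced by six-term-sequence bookkeeping because the set of unitaries with $\alpha(M) = M^*$ is not a $C^*$-subalgebra. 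The general noncommutative $\fR^{2n}_\omega$ then further requires a Rieffel-deformation approximation argument plus a matrix amplification $z_j \mapsto z_j V_j$ to reduce to commuting generators. None of these steps appears, even in outline, in your proposal; the ``spectral or homotopy argument'' placeholder is exactly where the entire difficulty of the paper lives.
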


\section{Commutation}

As seen in \cite{da15}, it is known that the even $\theta$-deformed sphere $C(\S^{2n}_\rho)$ may be written as the unreduced suspension of $C(\S^{2n-1}_\rho)$.
\be\label{eq:even=oddsuspension}
C(\S^{2n}_\rho) \cong \Sigma C(\S^{2n-1}_\rho) \cong \{f \in C([-1, 1], C(\S^{2n-1}_\rho)): f(-1) \in \C, f(1) \in \C\}
\ee
One approach is to define a homomorphism $C(\S^{2n}_\rho) \to \Sigma C(\S^{2n-1}_\rho)$ using the presentation of $C(\S^{2n}_\rho)$, and then to see that the realization of $C(\S^{2n-1}_\rho)$ as an algebra of functions from the positive section of $\S^{n-1}$ to the quantum torus $C(\T^n_\rho)$ (\cite{na97}) helps produce an inverse map. Further, when the odd sphere $C(\S^{2n+1}_\omega)$ is such that $C(\S^{2n+1}_\omega)/\langle z_{n+1} - z_{n+1}^* \rangle \cong C(\S^{2n}_\rho)$, $C(\S^{2n+1}_\omega)$ itself is isomorphic to a similar algebra of functions on a disk, instead of an interval. 
\be\label{eq:odd=diskfunctions}
\omega = \left[\begin{array}{ccc} \rho & & 1 \\ & & \vdots\\ 1 & \ldots & 1 \end{array} \right] \implies C(\S^{2n+1}_\omega) \cong \{g \in C(\overline{\D}, C(\S^{2n-1}_\rho)): g|_{\S^1} \textrm{ is } \C\textrm{-valued}\}
\ee
The antipodal maps for these algebras of functions, which are indexed below by dimension, satisfy the following identities.
\beu \ba
\alpha_{2n}(f)[x] = \alpha_{2n-1}(f(-x)), x \in [-1, 1]
\ea \eeu
\beu \ba
\alpha_{2n+1}(g)[u] = \alpha_{2n-1}(g(-u)), u \in [-1,1]
\ea \eeu
To prove noncommutative Borsuk-Ulam theorems for even spheres, we note that the odd dimensional case relied on the following idea from \cite{pa15a}, the backbone of Theorem \ref{thm:oddBU}.

\begin{lem}\label{lem:oddevenK_1}
If $U \in \mathcal{U}(C(\S^{2n-1}_\rho))$ is a unitary matrix with even entries, then in $K_1(C(\S^{2n-1}_\rho)) \cong \Z$, $[U]$ is an even integer. Because there is a dimension $2^{n-1}$ unitary matrix $Z_\rho(n)$ with odd entries and $[Z_\rho(n)] = 1$, it follows that any unitary matrix $W$ of dimension $(2m+1)2^{n-1}$ with odd entries must have $[W] \in 2 \Z + 1$.
\end{lem}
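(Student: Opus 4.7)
The plan is to reduce the first claim to the classical $K$-theoretic fact that the double cover $p : \S^{2n-1} \to \RP^{2n-1}$ induces multiplication by $2$ on $K^1$, and then to derive the second claim algebraically.

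For the first claim, since the antipodal automorphism $\alpha$ acts entrywise on $M_k(C(\S^{2n-1}_\rho))$, the hypothesis that $U$ has even entries is equivalent to $U \in M_k(A^\alpha)$, where $A := C(\S^{2n-1}_\rho)$ and $A^\alpha$ is the fixed subalgebra. Hence $[U]_{K_1(A)}$ is the image of $[U]_{K_1(A^\alpha)}$ under the inclusion-induced map $i_\ast : K_1(A^\alpha) \to K_1(A)$, and it suffices to show that $\mathrm{Im}(i_\ast) \subseteq 2 \Z$. To analyze $i_\ast$, I would use that $\alpha$ is implemented by the torus element $(-1, \ldots, -1) \in \T^n$ in the standard $\T^n$-action defining the Rieffel deformation; thus $\alpha$ commutes with the deformation, and $A^\alpha$ is itself the Rieffel deformation of $C(\RP^{2n-1}) = C(\S^{2n-1})^\alpha$. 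By the naturality of $K$-theory under Rieffel deformation (\cite{ri93b}), $i_\ast$ is then identified with the classical pullback $p^\ast : K^1(\RP^{2n-1}) \to K^1(\S^{2n-1})$, which is multiplication by $2$ on $\Z$ by the Atiyah--Hirzebruch spectral sequence (in the base case $n=1$ this is just the explicit squaring map $z \mapsto z^2$).

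For the second claim, set $V := Z_\rho(n)^{\oplus (2m+1)}$, a block-diagonal unitary matrix of dimension $(2m+1) 2^{n-1}$ with all odd entries and $[V] = (2m+1)[Z_\rho(n)] = 2m+1$. Then $W V^\ast$ is unitary with all even entries, since the product of two odd elements is even, so $[W V^\ast] \in 2\Z$ by the first part, and hence $[W] = [W V^\ast] + [V] \in 2 \Z + 1$.

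The main obstacle is the middle of the second paragraph: one must rigorously verify that the Rieffel deformation of $A^\alpha$ coincides with the fixed subalgebra of the deformed algebra, and that Rieffel's $K$-theory isomorphism is sufficiently natural to identify $i_\ast$ with $p^\ast$. Both properties follow from the $\T^n$-equivariance of the action generating $\alpha$, but they must be invoked with care; an alternative would be a direct $K$-theoretic computation via the Pimsner--Voiculescu sequence for the $\Z_2$-crossed product, at the cost of avoiding the comparison with the commutative case.
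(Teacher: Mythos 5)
The paper does not actually prove this lemma --- it is imported verbatim from \cite{pa15a} (it is ``the backbone of Theorem \ref{thm:oddBU}'', and the closely related Proposition \ref{prop:expconj} is likewise dispatched by citing \cite{pa15a}, Theorem 3.10) --- so there is no in-paper argument to compare yours against. Judged on its own terms, your proposal is essentially sound. The reduction of the first claim to the fixed-point subalgebra is correct: even entries means $U \in M_k(A^\alpha)$, and the paper itself asserts (in Section 3) that $C(\S^{2n-1}_\omega)^\alpha$ is the Rieffel deformation of $C(\RP^{2n-1}) \cong C(\S^{2n-1})^\alpha$, which is the identification you need. Your second paragraph, deducing the odd case from the even case via $W V^*$ with $V = Z_\rho(n)^{\oplus(2m+1)}$, is clean and is surely the intended content of the ``it follows that'' in the statement.

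Two points deserve more care than you give them, though you correctly flag the first. (a) Identifying $i_*: K_1(A^\alpha) \to K_1(A)$ with $p^*: K^1(\RP^{2n-1}) \to K^1(\S^{2n-1})$ requires naturality of Rieffel's $K$-theory isomorphism with respect to the torus-equivariant inclusion $A^\alpha \hookrightarrow A$; this is true (the deformation is functorial on equivariant maps and the Connes--Thom-type isomorphisms in \cite{ri93b} are natural) but it is the load-bearing step and cannot be waved at. (b) Your justification that $p^*$ is multiplication by $2$ via the Atiyah--Hirzebruch spectral sequence has a small gap: the generator of $K^1(\RP^{2n-1}) \cong \Z$ need not a priori lie in the top filtration step, so comparing top-degree cohomology does not immediately compute $p^*$ on the generator. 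A cleaner route is the cofiber sequence $\RP^{2n-2} \to \RP^{2n-1} \xrightarrow{q} \S^{2n-1}$: since $K^1(\RP^{2n-2}) = 0$, the collapse map induces an isomorphism $q^*$ on $K^1$, and $q \circ p$ has degree $1 + (-1)^{2n} = 2$, so $p^* = \pm 2$ on $K^1$. With these two repairs the argument is complete, and it is at least consistent in spirit with the machinery the paper does display (fixed-point subalgebras, $\RP^{2n-1}$, and deformation-invariance of $K$-theory).
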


The lemma showcases the particularly nice interaction between $K_1$ and the antipodal action for an odd sphere. The $K_1(C(\S^{2n-1}_\rho))$ generator $Z_\rho(n)$ has $*$-monomial entries (see \cite{na97}), which are all odd. Before considering anticommutation relations, we seek to establish a related identity on even spheres. Motivated by \cite{da02}, \cite{co01}, and \cite{pe13}, we see that in $K_0(C(\S^{2n}_\rho)) \cong \Z \oplus \Z$, there is a distinguished projection
\beu
P_\rho(n) = \left[\begin{array}{cc} \frac{1+x}{2}I_{2^{n-1}} & Z_\rho(n) \\ Z_\rho(n)^* & \frac{1-x}{2}I_{2^{n-1}} \end{array}\right],
\eeu
where $Z_\rho(n)$ is the same formal $*$-monomial matrix described above. When $Z_\rho(n)$ is viewed with entries in $C(\S^{2n}_\rho)$, it is normal and satisfies $Z_\rho(n) Z_\rho(n)^* = (\sum z_j z_j^*)I_{2^{n-1}} = (1 - x^2)I_{2^{n-1}}$, and it also commutes with $xI_{2^{n-1}}$. The projection $P_\rho(n)$ on the even sphere, much like its analogous unitary $Z_\rho(n)$ on the odd sphere, is set apart by its interaction with the antipodal action; the form
\beu
P_\rho(n) = \frac{1}{2}I_{2^n} + \frac{1}{2}\left[\begin{array}{cc} xI_{2^{n-1}} & Z_\rho(n) \\ Z_\rho(n)^* & -xI_{2^{n-1}} \end{array}\right]
\eeu
shows that $\alpha(P_\rho(n)) = I_{2^n} - P_\rho(n)$, where the antipodal action $\alpha$ is applied entrywise. Matrices of particular dimensions satisfying this identity will be distinguished in $K_0$. There is a marked advantage of odd $K$-theory for these types of problems, as the group operation of $K_1$ is realized as both direct sum and matrix multiplication, which allows results like Lemma \ref{lem:oddevenK_1} to manipulate the fixed point subalgebra instead of the $(-1)$-eigenspace. On the other hand, the group operation of $K_0$ is only realized as the direct sum, so we glean $K_0$ information on $C(\S^{2n}_\rho)$ from $K_1$ information of $C(\S^{2n + 1}_\omega)$.

If $P \in M_k(C(\S^{2n}_\omega))$ is a projection, then $B = 2P - I$ is a self-adjoint square root of $I$, or equivalently $B$ is self-adjoint and unitary. Through (\ref{eq:even=oddsuspension}), viewing $B$ as a function of $x \in [-1, 1]$ with (self-adjoint and unitary) values in $M_k(C(\S^{2n - 1}_\omega))$ allows us to form a unitary function $\widetilde{B}: \overline{\D} \to M_k(C(\S^{2n - 1}_\omega))$ by introducing an additional coordinate.
\beu
\widetilde{B}(x + iy) = \sqrt{1 - y^2}B\left(\frac{x}{\sqrt{1 - y^2}}\right) + iyI_k
\eeu
The function $\widetilde{B}$ appears to be only well-defined on $\overline{\D} \setminus \{\pm i\}$, but the  term $B\left(\cfrac{x}{\sqrt{1 - y^2}}\right)$ is bounded, as $B$ has norm $1$, and the multiplication by $\sqrt{1 - y^2}$ implies that $\widetilde{B}$ extends continuously to $\overline{\D}$ by the squeeze theorem. Because $B$ is self-adjoint and unitary, $\widetilde{B}$ is unitary. Moreover, the boundary conditions on $P$ and $B = 2P - I$ imply that if $x^2 + y^2 = 1$, then $\widetilde{B}(x + iy) \in M_k(\C)$, so $\widetilde{B}$ represents a unitary element of $M_k(C(\S^{2n + 1}_\rho))$ by (\ref{eq:odd=diskfunctions}). The association $P \mapsto B \mapsto \widetilde{B}$ between projections of dimension $k$ over $C(\S^{2n}_\omega)$ to unitaries of dimension $k$ over $C(\S^{2n + 1}_\rho)$ is continuous, meaning it respects path components, and in addition it is compatible with the direct sum. Also, if $P$ is a trivial projection $I_k \oplus 0_l$, then the function $\widetilde{B}$ takes values in $U_k(\C)$. The domain $\overline{\D}$ is contractible, so $\widetilde{B}$ is connected within $U_{k+l}(C(\S^{2n + 1}_\rho))$ to a scalar-entried matrix, which is then connected to the identity. (Note that this contraction was possible because the range of $\widetilde{B}$ only included scalar-valued matrices, which did not conflict with boundary conditions. In general an element in $U_{k + l}(C(\S^{2n + 1}_\rho))$ is not connected to the identity, even though $\overline{\mathbb{D}}$ is contractible.) In other words, if $\Omega_k$ denotes the function $P \mapsto \widetilde{B}$ for projections $P$ of dimension $k$, then the various $\Omega_k$ are compatible and produce a single homomorphism
\beu
\Omega: K_0(C(\S^{2n}_\omega)) \to K_1(C(\S^{2n + 1}_\rho))
\eeu
on the $K$-groups, and $\Omega$ has all trivial projections $I_k \oplus 0_l$ in its kernel. Moreover, each $\Omega_k$ is compatible with the $\Z_2$ antipodal action in the following way. If $P \in M_k(C(\S^{2n}_\omega))$ is a projection satisfying $\alpha_{2n}(P) = I - P$, then $B = 2P - I$ has $\alpha_{2n}(B) = -B$, or rather, $B$ is odd. This is reflected in the function algebra as $\alpha_{2n}(B)[x] = \alpha_{2n - 1}(B(-x)) = -B(x)$, which will help show that $\Omega_k(P) = \widetilde{B}$ is also odd.
\beu \ba
\alpha_{2n + 1}(\widetilde{B})[x + i y]	 	&= \alpha_{2n - 1}(\widetilde{B}(-x - iy)) \\
							&= \alpha_{2n - 1}\left(\sqrt{1 - (-y)^2}B\left(\frac{-x}{\sqrt{1 - (-y)^2}}\right) + i(-y)I_k\right) \\
							&= \sqrt{1 - y^2} \left[\alpha_{2n - 1}\left(B\left(\frac{-x}{\sqrt{1 - y^2}}\right)\right)\right] - iyI_k \\
							&= \sqrt{1 - y^2} \left(-B\left(\frac{x}{\sqrt{1 - y^2}}\right)\right) - iyI_k \\
							&=-\left( \sqrt{1 - y^2}B\left(\frac{x}{\sqrt{1 - y^2}}\right) + iyI_k \right) \\
							&= -\widetilde{B}(x + iy)
\ea \eeu

\begin{thm}\label{thm:K0oddprojections}
If $P$ is a projection matrix over $C(\S^{2n}_\omega)$ of dimension $(2m + 1)2^{n}$ that satisfies $\alpha(P) = I - P$, then the class of $P$ in $K_0(C(\S^{2n}_\rho)) \cong \Z \oplus \Z$ is not in the subgroup generated by trivial projections.
\end{thm}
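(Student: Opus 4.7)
The plan is to apply the homomorphism $\Omega \colon K_0(C(\S^{2n}_\omega)) \to K_1(C(\S^{2n+1}_\rho))$ built in the preceding paragraphs, which reduces the statement to a $K_1$ assertion about odd unitaries, accessible to Lemma \ref{lem:oddevenK_1}.

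Set $B = 2P - I$, so that $B$ is a self-adjoint unitary in $M_{(2m+1)2^n}(C(\S^{2n}_\omega))$, and let $\widetilde{B} \in M_{(2m+1)2^n}(C(\S^{2n+1}_\rho))$ be its disk extension. The hypothesis $\alpha(P) = I - P$ is equivalent to $\alpha_{2n}(B) = -B$, and the entrywise calculation displayed just before the theorem yields $\alpha_{2n+1}(\widetilde{B}) = -\widetilde{B}$; thus every entry of $\widetilde{B}$ is odd with respect to the antipodal action. The matrix dimension of $\widetilde{B}$ is $(2m+1)2^n = (2m+1)2^{(n+1)-1}$, which is precisely the format required by Lemma \ref{lem:oddevenK_1} when the lemma is applied to $C(\S^{2n+1}_\rho)$ (substituting $n+1$ for the $n$ appearing in the lemma). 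The lemma then forces $[\widetilde{B}] \in K_1(C(\S^{2n+1}_\rho)) \cong \Z$ to be an odd integer, and in particular nonzero.

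The preceding discussion confirms that $\Omega$ is a well-defined homomorphism and that each trivial projection $I_k \oplus 0_l$ is sent by $\Omega_k$ into the path component of the identity in $U_{k+l}(C(\S^{2n+1}_\rho))$. Consequently the subgroup of $K_0(C(\S^{2n}_\omega))$ generated by the trivial projections lies inside $\ker \Omega$. Since $\Omega([P]) = [\widetilde{B}] \ne 0$, the class $[P]$ cannot belong to that subgroup, and the theorem follows.

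The main obstacle is already absorbed into the groundwork preceding the theorem: constructing $\Omega$, checking that it descends to a well-defined $K_0 \to K_1$ homomorphism, and verifying that the disk extension preserves the equivariance $\alpha(B) = -B$ as $\alpha(\widetilde{B}) = -\widetilde{B}$. Once these ingredients are in place, the remaining argument is a matching of dimensions against Lemma \ref{lem:oddevenK_1} followed by a comparison of $[\widetilde{B}]$ with the image under $\Omega$ of the trivial subgroup.
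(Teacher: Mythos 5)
Your proposal is correct and follows essentially the same route as the paper: both apply the homomorphism $\Omega$ constructed in the preceding paragraphs, observe that $\widetilde{B}$ is odd of dimension $(2m+1)2^{(n+1)-1}$, and invoke Lemma \ref{lem:oddevenK_1} on $C(\S^{2n+1}_\rho)$ to conclude $[\widetilde{B}]$ is odd, hence nonzero, while trivial projections lie in $\ker\Omega$. The only difference is presentational: the paper argues by contradiction, whereas you argue directly.
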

\begin{proof}
Suppose $P$ is in the subgroup generated by trivial projections, and consider $V := \Omega_{(2m + 1)2^{n}}(P)$ in $U_{(2m + 1)2^{n}}(C(\S^{2n + 1}_\rho))$, where $\rho$ is an $(n + 1) \times (n + 1)$ parameter matrix with $\omega$ in the upper left and $1$ in all other entries. It follows that $[V]_{K_1} = \Omega([P]_{K_0})$ is the trivial element of $K_1(C(\S^{2n + 1}_\rho))$. However, $V$ is odd and of dimension $(2m + 1)2^n$, so this contradicts Lemma \ref{lem:oddevenK_1} for spheres of dimension $2n + 1 = 2(n + 1) - 1$.
\end{proof}

This theorem shows that the $\Z_2$ structure of even spheres is reflected in their $K$-theory. Just as in the odd case, we now prove a noncommutative Borsuk-Ulam theorem.

\begin{cor}\label{cor:eventhetaBU}
Suppose $\Phi: C(\S^{2n}_\rho) \to C(\S^{2n}_\omega)$ is a unital $*$-homomorphism that is equivariant for the antipodal maps. If $K_0 \cong \Z \oplus \Z$ is such that the first summand is generated by trivial projections, then the $K_0$ map induced by $\Phi$ is nontrivial on the second summand.
\end{cor}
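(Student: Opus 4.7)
The plan is to apply Theorem \ref{thm:K0oddprojections} to the image of the distinguished projection $P_\rho(n)$ under $\Phi$. Since $P_\rho(n)$ has dimension $2^n = (2 \cdot 0 + 1)2^n$ and satisfies $\alpha(P_\rho(n)) = I - P_\rho(n)$, Theorem \ref{thm:K0oddprojections} already tells us that $[P_\rho(n)]_{K_0}$ is not in the subgroup of $K_0(C(\S^{2n}_\rho))$ generated by trivial projections, i.e., it has a nonzero component on the second summand. The goal is to run the same argument for the matrix obtained by applying $\Phi$ entrywise.

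First I would verify that applying $\Phi$ entrywise to $P_\rho(n)$ produces a projection $\Phi(P_\rho(n)) \in M_{2^n}(C(\S^{2n}_\omega))$ that still satisfies the antipodal property $\alpha(\Phi(P_\rho(n))) = I - \Phi(P_\rho(n))$. This is immediate from the equivariance of $\Phi$: since $\alpha(P_\rho(n)) = I - P_\rho(n)$ holds entrywise and $\Phi$ intertwines the two antipodal actions, applying $\Phi$ entrywise yields the analogous identity in $M_{2^n}(C(\S^{2n}_\omega))$. Because the dimension $2^n$ is still of the form $(2m+1)2^n$ with $m = 0$, Theorem \ref{thm:K0oddprojections} applies and shows that $[\Phi(P_\rho(n))]_{K_0}$ in $K_0(C(\S^{2n}_\omega))$ is not in the subgroup generated by trivial projections.

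Finally, I would convert this nontriviality statement into the desired nontriviality of the map on the second summand. Write $[P_\rho(n)]_{K_0} = a \, e_1 + b \, e_2$ in $K_0(C(\S^{2n}_\rho)) \cong \Z \oplus \Z$, where $e_1$ denotes the class of the unit (a generator of the first summand, by hypothesis) and $e_2$ is a generator of the second summand; by the conclusion of Theorem \ref{thm:K0oddprojections} applied to $P_\rho(n)$ itself, $b \neq 0$. Because $\Phi$ is unital, $\Phi_*(e_1) = [1] \in K_0(C(\S^{2n}_\omega))$, which lies in the first summand by hypothesis on $K_0(C(\S^{2n}_\omega))$. Thus
\beu
\Phi_*([P_\rho(n)]_{K_0}) = a \, [1] + b \, \Phi_*(e_2),
\eeu
and the first term is in the first summand. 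Since the left-hand side is not contained in the first summand, $b \, \Phi_*(e_2)$ must have nonzero second-summand component, and because $b \neq 0$ this forces $\Phi_*(e_2)$ to have nonzero second-summand component, as desired.

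The proof is essentially a bookkeeping exercise on top of Theorem \ref{thm:K0oddprojections}; the only place any care is needed is the final decomposition step, where one must exploit unitality of $\Phi$ and the hypothesis on which summand of $K_0$ contains $[1]$ to separate the action of $\Phi_*$ on the two summands. No analytic or $K$-theoretic obstacle arises beyond what was already handled in the construction of $\Omega$.
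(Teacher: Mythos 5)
Your proposal is correct and follows the paper's own proof exactly: both apply Theorem \ref{thm:K0oddprojections} to the image $\Phi(P_\rho(n))$ of the distinguished projection, using equivariance to preserve the condition $\alpha(P) = I - P$. The only difference is that you spell out the final bookkeeping (decomposing $[P_\rho(n)]$ and using unitality of $\Phi$), which the paper leaves implicit.
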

\begin{proof}
Consider the projection $P_\rho(n) = \left[\begin{array}{cc} \frac{1 + x}{2}I_{2^{n - 1}} & Z_\rho(n) \\ Z_\rho(n)^* & \frac{1 - x}{2}I_{2^{n - 1}} \end{array}\right]$, which is of dimension $2^n$ and satisfies $\alpha(P_\rho(n)) = I - P_\rho(n)$. Because $\Phi$ is equivariant, the same applies to $\Phi(P_\rho(n))$, so $\Phi(P_\rho(n))$ is not in the subgroup generated by trivial projections.
\end{proof}

\section{Anticommutation}

The same argument to establish (\ref{eq:even=oddsuspension}), also seen in \cite{pa15b}, can show that $\fR^{2n}_\omega$ is a noncommutative unreduced suspension of $C(\S^{2n-1}_\omega)$.
\be\label{eq:antipaths}
\fR^{2n}_\omega \cong \Sigma^\alpha C(\S^{2n-1}_\omega) := \{f \in C([0, 1], C(\S^{2n-1}_\omega) \rtimes_\alpha \Z_2): f(0) \in C(\S^{2n-1}_\omega), f(1) \in C^*(\Z_2)\}
\ee
In the above, $\alpha: C(\S^{2n-1}_\omega) \to C(\S^{2n-1}_\omega)$ denotes the antipodal map, the unique homomorphism with $\alpha(z_j) = -z_j$. The generators $z_1, \ldots, z_n, x$ in the presentation (\ref{eq:antipres}) of $\fR^{2n}_\omega$ then take the following function forms.
\beu
x \sim X(t) = t \delta
\eeu 
\beu
z_j \sim Z_j(t) = \sqrt{1-t^2}z_j
\eeu
The computation of even $K$-theory for $\fR^{2n}_\omega$ will be aided by the following matrix expansion map.

\begin{defn}
The \textit{expansion map} $E_\alpha: C(\S^{2n - 1}_\omega) \rtimes_\alpha \Z_2 \to M_2(C(\S^{2n - 1}_\omega))$ is the injective, unital $*$-homomorphism defined by the following rule.
\beu
f + g \delta \mapsto \left[\begin{array}{cc} f & g \\ \alpha(g) & \alpha(f) \end{array} \right]
\eeu
\end{defn}

See \cite{wi07}, section 2.5 for a similar map. The map $E_\alpha$ is certainly not surjective, as all matrices in its range satisfy a very visible symmetry condition.

\begin{prop}\label{prop:expconj}
If $M = \left[\begin{array}{cc} f & g \\ \alpha(g) & \alpha(f) \end{array} \right] \in \textrm{Ran}(E_\alpha)$, then $M$ satisfies the symmetry condition $M = \left[\begin{array}{cc} 0 & 1 \\ 1 & 0 \end{array} \right]^* \alpha(M)  \left[\begin{array}{cc} 0 & 1 \\ 1 & 0 \end{array} \right]$. The $K_1(C(\S^{2n-1}_\omega))$ class of $M$ is then represented by an even integer.
\end{prop}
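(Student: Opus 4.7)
The plan is to verify the symmetry condition by a short matrix computation and then reduce the $K_1$ claim to the framework of Lemma \ref{lem:oddevenK_1}.

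Applying $\alpha$ entry-wise and using $\alpha^2 = \mathrm{id}$ gives
\[
\alpha(M) = \left[\begin{array}{cc} \alpha(f) & \alpha(g) \\ g & f \end{array}\right].
\]
With $J = \left[\begin{smallmatrix} 0 & 1 \\ 1 & 0 \end{smallmatrix}\right]$, the identities $J = J^* = J^{-1}$ imply that the conjugation $J \alpha(M) J$ swaps both the two rows and the two columns of $\alpha(M)$, returning $M$. This establishes $M = J^* \alpha(M) J$.

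For the $K_1$ statement, the symmetry identifies $\mathrm{Ran}(E_\alpha)$ with the fixed subalgebra of the involutive automorphism $A \mapsto J\alpha(A)J$ on $M_2(C(\S^{2n-1}_\omega))$. Since $E_\alpha$ is an isomorphism onto its image, $\mathrm{Ran}(E_\alpha) \cong C(\S^{2n-1}_\omega) \rtimes_\alpha \Z_2$, so $[M]$ lies in the image of the induced pushforward $K_1(C(\S^{2n-1}_\omega) \rtimes_\alpha \Z_2) \to K_1(C(\S^{2n-1}_\omega))$. Because the antipodal action is free, the crossed product is Morita equivalent to the fixed-point subalgebra $C(\S^{2n-1}_\omega)^\alpha$ (a noncommutative analog of $C(\RP^{2n-1})$), and under this equivalence the pushforward corresponds to the double-cover inclusion $C(\S^{2n-1}_\omega)^\alpha \hookrightarrow C(\S^{2n-1}_\omega)$. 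An argument paralleling Lemma \ref{lem:oddevenK_1} (exhibiting a generator for $K_1$ of the fixed subalgebra as $Z_\omega(n)^2$ or analogous and tracking its image) shows this inclusion acts as multiplication by $2$ on $K_1 \cong \Z$, so $[M] \in 2\Z$.

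The main obstacle is producing this argument self-contained, without appeal to Morita-equivalence machinery. The cleanest alternative would be to construct, from $M$ and the symmetry alone, an explicit unitary $V \in \mathcal{U}(M_k(C(\S^{2n-1}_\omega)))$ with $\alpha$-invariant entries satisfying $[V] = [M]$ in $K_1$, so that Lemma \ref{lem:oddevenK_1} applies immediately. A useful intermediate identity is $(MJ)^2 = M\alpha(M)$, which follows from $JMJ = \alpha(M)$ and shows $[M\alpha(M)] = 2[M]$; however, halving this to exhibit an even representative of $[M]$ itself (rather than of $2[M]$) is the delicate step where the covering-space structure seems to enter essentially.
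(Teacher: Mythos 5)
Your verification of the symmetry condition is correct and is exactly what the paper dismisses as ``apparent'': applying $\alpha$ entrywise and conjugating by the swap matrix $J$ interchanges the two rows and the two columns, recovering $M$. The substance of the proposition is entirely in the second sentence, and there the paper's proof is simply a citation of \cite{pa15a}, Theorem 3.10, which (in the author's earlier work) establishes directly that a unitary $U$ over $C(\S^{2n-1}_\omega)$ satisfying a scalar-conjugate symmetry $U = V^*\alpha(U)V$ has even $K_1$ class. Your proposal replaces that citation with a Morita-equivalence/covering-space argument, and as you yourself concede, it does not close.

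The gap is concrete, and it is worth seeing that it is also circular relative to this paper's own logic. You want to deduce evenness of $[M]$ from the claim that the composite $K_1(C(\S^{2n-1}_\omega)\rtimes_\alpha\Z_2) \cong \Z \to K_1(C(\S^{2n-1}_\omega)) \cong \Z$ induced by $E_\alpha$ is multiplication by $2$. But in Section 3 the paper proves precisely that statement \emph{from} Proposition \ref{prop:expconj}: it exhibits $E_\alpha(Z_\omega(n))$, unitarily equivalent to $Z_\omega(n)\oplus Z_\omega(n)$ and hence of index $2$, and then uses the proposition (image contained in $2\Z$) to conclude that this index-$2$ element must be the image of a generator. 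Without the proposition you only know the image contains $2\Z$; you do not know it is contained in $2\Z$, which is the direction you need. Your suggested repair --- that $Z_\omega(n)^2$ ``or analogous'' generates $K_1$ of the fixed-point subalgebra and tracks to $2$ --- is exactly the unproven hard content (it is the noncommutative analogue of the classical fact that $\pi^*\colon K^1(\RP^{2n-1})\to K^1(\S^{2n-1})$ has image $2\Z$), and proving it is essentially what \cite{pa15a}, Theorem 3.10 accomplishes. Two further steps in your outline would also need justification: that the Morita-equivalence isomorphism intertwines the $E_\alpha$-pushforward with the map induced by the inclusion $C(\S^{2n-1}_\omega)^\alpha \hookrightarrow C(\S^{2n-1}_\omega)$, and that the deformation isomorphisms on $K$-theory are natural enough to transport the commutative computation. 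Your closing observation is accurate: the identity $(MJ)^2 = M\alpha(M)$ only recovers $[M\alpha(M)] = 2[M]$ (since $\alpha$ acts trivially on $K_1$ of an odd sphere), which is true of any unitary and carries no information; the evenness of $[M]$ itself genuinely requires the covering-space input that the cited theorem supplies.
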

\begin{proof}
The symmetry condition is apparent, and we may apply \cite{pa15a}, Theorem 3.10.
\end{proof}

The antipodal action $\alpha$ is saturated, so as a result of Morita equivalence, $K$-theory of the crossed product $C(\S^{2n - 1}_\omega) \rtimes_\alpha \Z_2$ is isomorphic to the $K$-theory of the fixed point subalgebra $C(\S^{2n - 1}_\omega)^\alpha$, the algebra of even elements. This is useful when considering the ideal $J$ of $\fR^{2n}_\omega$ consisting of functions which vanish at the endpoints $0$ and $1$, so $J \cong S(C(\S^{2n - 1}_\omega) \rtimes_\alpha \Z_2)$ and consequently $K_i(J) \cong K_i(S(C(\S^{2n - 1}_\omega) \rtimes_\alpha \Z_2))) \cong K_{1 - i}(C(\S^{2n - 1}_\omega) \rtimes_\alpha \Z_2) \cong K_{1 - i}(C(\S^{2n - 1}_\omega)^\alpha)$. The fixed point subalgebra $C(\S^{2n - 1}_\omega)^\alpha$ is a Rieffel deformation of $C(\mathbb{RP}^{2n - 1}) \cong C(\S^{2n-1})^\alpha$ and therefore has identical $K$-theory ($K$-theory of real projective space is computed in \cite{at67}, Proposition 2.7.7). Boundary information is contained in the quotient $\fR^{2n}_\omega / J \cong C(\S^{2n - 1}_\omega) \oplus (\C \oplus \C \delta)$, and $K$-theory respects direct sums, so the six term exact sequence yields the following.
\be\label{eq:Kanticomm}
 \begin{tikzpicture}
  \matrix (m) [matrix of math nodes,row sep=3em,column sep=4em,minimum width=2em]
  {
     K_0(S(C(\S^{2n - 1}_\omega) \rtimes_\alpha \Z_2)) & K_0(\fR^{2n}_\omega) & K_0(C(\S^{2n - 1}_\omega)) \oplus K_0(\C + \C\delta) \\
     K_1(C(\S^{2n - 1}_\omega)) \oplus K_1(\C + \C \delta) & K_1(\fR^{2n}_\omega) &   K_1(S(C(\S^{2n - 1}_\omega) \rtimes_\alpha \Z_2))\\};
  \path[-stealth]
    (m-2-1) edge node [left] {$\eta$} (m-1-1)
    (m-1-1) edge node [above] {$\gamma$} (m-1-2)
    (m-2-2) edge node [above] {$\lambda$} (m-2-1)
    (m-1-2) edge node [above] {$\upsilon$} (m-1-3)
    (m-2-3) edge node [above] {$\kappa$} (m-2-2)
    (m-1-3) edge node [right] {$\sigma$} (m-2-3);
\end{tikzpicture}
\ee

We repeat this sequence below with the known isomorphism classes and use it to calculate $K_0(\fR^{2n}_\omega)$.
\be\label{eq:Kanticomm2}
 \begin{tikzpicture}
  \matrix (m) [matrix of math nodes,row sep=3em,column sep=4em,minimum width=2em]
  {
    \Z & K_0(\fR^{2n}_\omega) & \Z \oplus \Z \oplus \Z \\
   \Z & K_1(\fR^{2n}_\omega) &  \Z \oplus \Z_{2^{n - 1}} \\};
  \path[-stealth]
    (m-2-1) edge node [left] {$\eta$} (m-1-1)
    (m-1-1) edge node [above] {$\gamma$} (m-1-2)
    (m-2-2) edge node [above] {$\lambda$} (m-2-1)
    (m-1-2) edge node [above] {$\upsilon$} (m-1-3)
    (m-2-3) edge node [above] {$\kappa$} (m-2-2)
    (m-1-3) edge node [right] {$\sigma$} (m-2-3);
\end{tikzpicture}
\ee

\begin{prop}
The group $K_0(\fR^{2n}_\omega)$ is isomorphic to $\Z \oplus \Z$, and if matrix projections $P \in M_k(\fR^{2n}_\omega)$ are considered as paths from (\ref{eq:antipaths}), the $K_0$ data is obtained from the two ranks $\textrm{Rank}_{\delta = \pm 1}(P(1))$.
\end{prop}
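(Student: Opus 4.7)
The plan is to extract $K_0(\fR^{2n}_\omega)$ from \eqref{eq:Kanticomm2} by pinning down $\mathrm{Im}(\upsilon)$ via the expansion map together with an explicit matrix projection, and by establishing injectivity of $\upsilon$ through surjectivity of the index map $\eta$.

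\textbf{Parity constraint.} For any matrix projection $P \in M_k(\fR^{2n}_\omega)$ presented as a path via \eqref{eq:antipaths}, applying $E_\alpha$ pointwise produces a continuous path $\widetilde{P}(t) \in M_{2k}(C(\S^{2n-1}_\omega))$ of projections. Because $K_0(C(\S^{2n-1}_\omega)) \cong \Z$ is detected by rank, and rank is constant along paths of projections, the rank of $\widetilde{P}(t)$ is independent of $t$. At $t = 0$, $\widetilde{P}(0) = P(0) \oplus \alpha(P(0))$ has rank $2r_0$ (since $\alpha$ preserves the $K_0$ rank); at $t = 1$, the identity $E_\alpha(a + b\delta) = \left[\begin{smallmatrix}a & b \\ b & a\end{smallmatrix}\right]$ shows the rank equals $r^+ + r^-$, where $r_0 := \mathrm{Rank}(P(0))$ and $r^\pm := \mathrm{Rank}(P(1)|_{\delta = \pm 1})$. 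Hence $2r_0 = r^+ + r^-$, which forces $\mathrm{Im}(\upsilon) \subseteq L := \{(a,b,c) : 2a = b+c\}$, a rank-$2$ sublattice of $\Z^3$.

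\textbf{Injectivity via the index map.} Unpacking $\eta$ shows that $\eta([Z_\omega(n)])$ coincides with the image of $[Z_\omega(n)]$ under the inclusion $C(\S^{2n-1}_\omega) \hookrightarrow C(\S^{2n-1}_\omega) \rtimes_\alpha \Z_2$, identified with $K_0(J)$ via the suspension and Morita equivalence with the Rieffel deformation of $C(\mathbb{RP}^{2n-1})$. Geometrically this is the transfer $p_!$ along the double cover $p : \S^{2n-1} \to \mathbb{RP}^{2n-1}$. The antipodal map on $\S^{2n-1}$ has degree $+1$, hence acts trivially on $K_1(\S^{2n-1}) \cong \Z$, and the projection formula yields $p^* p_! = 1 + \alpha^* = 2$; combined with $p^*$ being multiplication by the cover degree $2$ on $K_1$, this forces $p_!$ to be an isomorphism $\Z \to \Z$. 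Therefore $\eta$ is surjective, $\gamma = 0$, and $\upsilon$ is injective.

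\textbf{Constructing the second generator.} The trivial projection $1$ maps under $\upsilon$ to $(1,1,1)$. Define
\[
P := \begin{pmatrix} \tfrac{1+x}{2} I_{2^{n-1}} & \tfrac{1}{2} Z_\omega(n) \\ \tfrac{1}{2} Z_\omega(n)^* & \tfrac{1+x}{2} I_{2^{n-1}} \end{pmatrix} \in M_{2^n}(\fR^{2n}_\omega).
\]
Since each entry of $Z_\omega(n)$ is an odd $*$-monomial in the $z_j$, we have $xZ_\omega(n) = -Z_\omega(n)x$; combined with $Z_\omega(n) Z_\omega(n)^* = Z_\omega(n)^* Z_\omega(n) = (1-x^2) I_{2^{n-1}}$, a direct calculation confirms $P^2 = P$. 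At $t = 1$, $Z_\omega(n)$ vanishes and $P(1) = \tfrac{1+\delta}{2} I_{2^n}$, giving $\upsilon([P]) = (2^{n-1}, 2^n, 0)$, linearly independent of $(1,1,1)$ inside $L$. Together with injectivity of $\upsilon$ and the inclusion $\mathrm{Im}(\upsilon) \subseteq L \cong \Z^2$, this forces $K_0(\fR^{2n}_\omega) \cong \Z \oplus \Z$; the relation $r_0 = (r^+ + r^-)/2$ shows any class is determined by the pair $(r^+, r^-)$. The technical heart of the argument is the identification of $\eta$ with the transfer $p_!$ and the application of the projection formula to deduce that $\eta$ is an isomorphism; the remaining steps are direct computations using $E_\alpha$ and the structure of $Z_\omega(n)$.
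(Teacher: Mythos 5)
Your proposal follows the same skeleton as the paper's proof: the rank constraint $2l = m + n$ on $\mathrm{Ran}(\upsilon)$ obtained by pushing a path of projections through $E_\alpha$, the identity and the projection $P^\prime_\omega(n)$ producing the independent tuples $(1,1,1)$ and $(2^{n-1}, 2^n, 0)$, and the deduction that surjectivity of $\eta$ forces $\gamma = 0$ and hence injectivity of $\upsilon$, so that $K_0(\fR^{2n}_\omega) \cong \mathrm{Ran}(\upsilon)$ is free of rank $2$ and detected by the pair of ranks at $t = 1$. Those parts are correct and essentially identical to the paper. The one place you genuinely diverge is the proof that $\eta$ is surjective. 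The paper stays inside the deformed algebra: it shows $E_\alpha(Z_\omega(n))$ is unitarily equivalent to $Z_\omega(n) \oplus Z_\omega(n)$, hence has class $2$ in $K_1(C(\S^{2n-1}_\omega)) \cong \Z$, while Proposition \ref{prop:expconj} forces every invertible in $\mathrm{Ran}(E_\alpha)$ to have even class; since $K_1(C(\S^{2n-1}_\omega) \rtimes_\alpha \Z_2) \cong \Z$, an element mapping to $2$ under a homomorphism into $2\Z$ must be a generator, so $[Z_\omega(n)]$ generates and $\eta$ is onto. You instead identify $\iota_*\colon K_1(C(\S^{2n-1}_\omega)) \to K_1(C(\S^{2n-1}_\omega)\rtimes_\alpha\Z_2)$ with the topological transfer $p_!$ for the double cover $\S^{2n-1} \to \RP^{2n-1}$ and compute $p_! = 1$ from $p^*p_! = 1 + \alpha^* = 2$ and $p^* = 2$. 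That computation is right in the commutative case (including the claim $p^* = 2$, which comes from $c \circ p$ having degree $1 + \deg(\alpha) = 2$ where $c$ collapses $\RP^{2n-2}$), and it is an attractive, more topological route.

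The gap is that you apply this commutative computation directly to the deformed algebra. Everything in your surjectivity step --- the identification of $\iota_*$ with $p_!$, the projection formula, the degree computation --- is a statement about $C(\S^{2n-1})$ and $C(\RP^{2n-1})$, whereas the map you need to control is $\iota_*$ for $C(\S^{2n-1}_\omega) \hookrightarrow C(\S^{2n-1}_\omega) \rtimes_\alpha \Z_2$. To transport the answer you must know that the Rieffel-deformation isomorphisms on $K$-theory are natural with respect to the inclusion into the crossed product and the Morita equivalence with the fixed-point subalgebra, i.e., that the square relating $\iota_*$ at parameter $\omega$ to $\iota_*$ at the trivial parameter commutes. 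This is plausible (the antipodal action commutes with the torus action defining the deformation, and the whole extension deforms in a continuous field), but it is not automatic from the bare fact that deformation preserves the $K$-groups, and your write-up does not address it. The paper's $E_\alpha$-plus-Proposition \ref{prop:expconj} argument is precisely how this issue is sidestepped: it produces the generator and its index entirely within $C(\S^{2n-1}_\omega)$, with the evenness input supplied by a theorem already proved for the deformed spheres. Either patch the naturality point explicitly or substitute the paper's expansion-map argument for that one step; the rest of your proof stands.
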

\begin{proof}
The subalgebra $C^*(\Z_2) = \C + \C \delta \leq C(\S^{2n - 1}_\omega) \rtimes_\alpha \Z_2$ is two dimensional and isomorphic to a continuous function algebra on two points, $C(\{ \pm 1\})$, so an evaluation of $\delta$ at $1$ or $-1$ can be used on this subalgebra (however, an evaluation does not make sense on the entire crossed product). Moreover, $K_0(C(\S^{2n - 1}_\omega) \oplus (\C + \C\delta)) \cong \Z \oplus \Z \oplus \Z$ is essentially rank data in three instances: a rank for $K_0(C(\S^{2n - 1}_\omega)) \cong \Z$ and two ranks for $K_0(C(\{\pm1\})) \cong \Z \oplus \Z$ based on evaluations $\delta = \pm 1$. However, the map $\upsilon$ in (\ref{eq:Kanticomm2}) is not surjective by the following argument. First, the projections $\cfrac{1 \pm \delta}{2} \in \C + \C \delta$ that generate its $K_0$ group are sent under $E_\alpha$ to rank one projections $\left[\begin{array}{cc}1/2 & \pm 1/2 \\ \pm 1/2 & 1/2 \end{array}\right]$ in $M_2(C(\S^{2n - 1}_\omega))$, so in $K_0(M_2(C(\S^{2n - 1}_\omega))) \cong K_0(C(\S^{2n - 1}_\omega)) \cong \Z$ these are the generator $1$. Next, if we consider $C(\S^{2n - 1}_\omega)$, whose $K_0$ group is again cyclic and generated by the trivial projection $1$, the image $E_\alpha(1) = I_2$ has doubled in rank, and the same will happen to any projection over $C(\S^{2n - 1}_\omega)$ when $E_\alpha$ is applied. Now, any projection matrix over $\fR^{2n}_\omega$ is a path of projection matrices over $C(\S^{2n - 1}_\omega) \rtimes_\alpha \Z_2$, so the $K_0$ class of the images remains constant along the path. Based on the previous computations, if $P \in M_k(\fR^{2n}_\omega)$ is viewed as a path into $M_k(C(\S^{2n - 1}_\omega) \rtimes_\alpha \Z_2)$, then there is the following restriction on the ranks of $P(t)$.
\beu \ba
2 \cdot \textrm{Rank}_{C(\S^{2n - 1}_\omega)}(P(0)) 	&= \textrm{Rank}_{C(\S^{2n - 1}_\omega)}(E_\alpha(P(0))) \\
									&= \textrm{Rank}_{C(\S^{2n - 1}_\omega)}(E_\alpha(P(1))) \\
									&= \textrm{Rank}_{\delta = 1}(P(1)) + \textrm{Rank}_{\delta = -1}(P(1))
\ea \eeu

From this equation, we reach constraints on the range of $\upsilon$ in (\ref{eq:Kanticomm2}).
\be
(l, m, n) \in \textrm{Ran}(\upsilon) \implies 2l = m + n
\ee
The first clear projection in $\textrm{Ran}(\upsilon)$ is the identity element, which produces the predictable ranks $(1, 1, 1)$. The second is based on adjusting coefficients from a projection over $\theta$-deformed even spheres; in $\fR^{2n}_\omega$, $x$ anticommutes with each $*$-monomial entry of $Z_\omega(n)$.
\be\label{eq:what}
P := P^\prime_\omega(n) := \cfrac{1}{2}\hspace{2 pt}I_{2^n} + \cfrac{1}{2} \left[\begin{array}{cc} xI_{2^{n - 1}} & Z_\omega(n) \\ Z_\omega(n)^* & xI_{2^{n - 1}} \end{array} \right]
\ee
When viewed as a path, the projection $P$ has $P(0) = \cfrac{1}{2} \left[\begin{array}{cc} I_{2^{n - 1}} & Z_\omega(n) \\ Z_\omega(n)^* & I_{2^{n - 1}} \end{array}\right]$, which has rank $2^{n - 1}$ in $K_0(C(\S^{2n - 1}_\omega))$, and $P(1) = \frac{1 + \delta}{2} I_{2^n}$, which has rank $2^n$ at $\delta = 1$ and rank $0$ at $\delta = -1$. Therefore, $P$ produces the element $(2^{n-1}, 2^{n}, 0) \in \textrm{Ran}(\upsilon)$. The tuples $(1, 1, 1)$ and $(2^{n-1}, 2^{n}, 0)$ are independent, so their span has free rank $2$, but for large $n$ the second tuple is not in reduced form, meaning we cannot tell if $2l = m + n$ completely characterizes elements of $\textrm{Ran}(\upsilon)$ (that is, if $\textrm{Ran}(\upsilon)$ is a full or proper subset of $\textrm{span}_\Z \{(1, 1, 1), (1, 2, 0)\}$).
\beu
\textrm{span}_\Z \{(1, 1, 1), 2^{n-1}(1, 2, 0)\} \leq \textrm{Ran}(\upsilon) \leq \textrm{span}_\Z \{(1, 1, 1), (1, 2, 0) \}
\eeu

Next, note that $K_1(C(\S^{2n - 1}_\omega) \rtimes_\alpha \Z_2) \cong K_1(C(\RP^{2n - 1})) \cong \Z$ is generated by $Z_\omega(n)$, which is again seen through $E_\alpha$. The expansion $E_\alpha(Z_\omega(n))$ is of index $2$ because it is unitarily equivalent to $Z_\omega(n) \oplus Z_\omega(n)$, and by Proposition \ref{prop:expconj}, any invertible matrix in $\textrm{Ran}(E_\alpha)$ will correspond to an even integer in $K_1(C(\S^{2n - 1}_\omega))$, so an index 2 matrix in $K_1(\textrm{Ran}(E_\alpha)) \cong K_1(C(\S^{2n - 1}_\omega) \rtimes_\alpha \Z_2) \cong \Z$ is a generator. Since $E_\alpha$ is an isomorphism between $C(\S^{2n - 1}_\omega \rtimes_\alpha \Z_2)$ and $\textrm{Ran}(E_\alpha)$, this implies that $K_1(C(\S^{2n - 1}_\omega) \rtimes_\alpha \Z_2)$ is generated by $Z_\omega(n)$. The connecting map $\eta$ between  $K_1(C(\S^{2n - 1}_\omega) \oplus (\C + \C \delta)) \cong K_1(C(\S^{2n - 1}_\omega))$ and $K_0(S(C(\S^{2n - 1}_\omega) \rtimes_\alpha \Z_2))$ from the six term sequence mimics the form of the isomorphism $K_1(C(\S^{2n - 1}_\omega) \rtimes_\alpha \Z_2) \to K_0(S(C(\S^{2n - 1}_\omega \rtimes_\alpha \Z_2)))$ of Bott periodicity, but now that we know $K_1(C(\S^{2n - 1}_\omega) \rtimes_\alpha \Z_2)$ is generated by a unitary over $C(\S^{2n - 1}_\omega)$ alone, we can conclude that $\eta$ is surjective. Therefore $\gamma$ is the zero map and $\upsilon$ is injective, so $K_0(\fR^{2n}_\omega)$ is isomorphic to $\textrm{Ran}(\upsilon) \cong \Z \oplus \Z$.
\end{proof}

Even though the $K_0$ groups of $\fR^{2n}_\omega$ and $C(\S^{2n}_\omega)$ are abstractly isomorphic, the generators are different in that $K_0$ data of $\fR^{2n}_\omega$ is contained in the possibly distinct ranks $\textrm{Rank}_{\delta = 1}$ and $\textrm{Rank}_{\delta = -1}$, whereas $K_0(C(\S^{2n}_\omega))$ inherits its $K$-theory from $K_0(C(\S^{2n}))$, which has one summand for rank and another for a nontrivial vector bundle. Despite all of this, the form of the nontrivial projection $P^\prime_\omega(n)$ in $(\ref{eq:what})$ is just a slight sign change from a projection $P_\omega(n)$ over $C(\S^{2n}_\omega)$ seen in the proof of Corollary \ref{cor:eventhetaBU}. The distinguished projection
\be\label{eq:distinguishedprojection}
P^\prime_\omega(n) := \cfrac{1}{2}\hspace{2 pt}I_{2^n} + \cfrac{1}{2} \left[\begin{array}{cc} xI_{2^{n - 1}} & Z_\omega(n) \\ Z_\omega(n)^* & xI_{2^{n - 1}} \end{array} \right]
\ee
is also compatible with the antipodal map, in perfect analogy with the projection $P_\omega(n)$ on $C(\S^{2n}_\omega)$. Note that the antipodal map on $\fR^{2n}_\omega \cong \Sigma^\alpha C(\S^{2n-1}_\omega)$ is given in path form as
\beu
\mathfrak{a}(f)[t] = \alpha \widehat{\alpha}(f(t)) = \widehat{\alpha}\alpha(f(t)),
\eeu
where $\widehat{\alpha}: a + b \delta \mapsto a - b \delta$ is the dual action on $C(\S^{2n-1}_\omega) \rtimes_\alpha \Z_2$. The ranks $\textrm{Rank}_{\delta = \pm 1}$ at $t = 1$ take different values for $P^\prime_\omega(n)$, as evaluation at $t = 1$ yields $\frac{1 + \delta}{2} I_{2^{n}}$. Based on this example, we can ask if any $2^n \times 2^n$ projection in $\fR^{2n}_\omega$ satisfying $\mathfrak{a}(P) = I - P$ is nontrivial in $K_0$. This result does hold, in analogy with the $\theta$-deformed even spheres, but the proof is necessarily quite different, as the suspension argument of the previous section does not pass over cleanly to this case. In order to rectify this problem, we manipulate a particular non-free action on commutative spheres.

\begin{thm}\label{thm:gammaustar}
 Let the commutative sphere $C(\S^k)$ be equipped with a $\Z_2$ action $\gamma$ that fixes a distinguished real-valued coordinate $h$ but negates the remaining coordinates $x_1, \ldots, x_{k}$, and let $U$ be a unitary matrix over $C(\S^k)$ with $\gamma(U) = U^*$ and $U|_{h = \pm 1} = I$. Then the following hold.
\begin{enumerate}
\item If $U$ represents a trivial element in $K_1(C(\S^k))$, then there is a path connecting some stabilization $U \oplus I$ to $I$ within the set of unitaries that satisfy $\gamma(M) = M^*$ and $M|_{h = \pm 1} = I$.

\item If $k$ is odd, then $U$ corresponds to an even integer in $K_1(C(\S^k)) \cong \Z$.
\end{enumerate}
\end{thm}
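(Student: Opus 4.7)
My plan is to treat both parts by a common inductive strategy on $k$, based on a fundamental domain for the free part of $\gamma$. Take $D = \{x_k \geq 0\} \subset \S^k$, a closed $k$-disk with $\S^k = D \cup \gamma(D)$ and $D \cap \gamma(D) = \partial D \cong \S^{k-1}$. Since $\gamma$ restricted to $\partial D$ still fixes $h$ and negates the remaining $x$-coordinates, with the two poles as its only fixed points, $U|_{\partial D}$ is a constrained unitary of exactly the type covered by the theorem at one dimension lower. The inclusion $\partial D \hookrightarrow \S^k$ is null-homotopic for $k \geq 2$, so $[U|_{\partial D}]$ is automatically trivial in $K_1(C(\S^{k-1}))$; by the inductive form of part 1 at dimension $k-1$ the restriction $U|_{\partial D}$ admits a constrained nullhomotopy after stabilization, and I would extend this into a collar of $\partial D$ to modify $U$ so that $U|_{\partial D} = I$ while preserving the constraint $\gamma(U) = U^*$ throughout.

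For part 2 with $k$ odd, once $U|_{\partial D} = I$, the restriction $U|_D$ collapses to a class $n \in \widetilde{K}_1(D/\partial D) \cong \widetilde{K}_1(\S^k) \cong \Z$. The piece $U|_{\gamma(D)}$ is determined by $U(\gamma q) = U(q)^*$, and since the inversion $\iota(u) = u^*$ on the H-space $\mathcal{U}_\infty$ acts as $-1$ on $\pi_k$ while $\gamma: D \to \gamma(D)$ has degree $(-1)^k$, its collapsed class equals $(-1)^{k+1} n$, which simplifies to $n$ for odd $k$. The Mayer-Vietoris identity $[U]_{\S^k} = [U|_D] + [U|_{\gamma(D)}]$ coming from the pinch $\S^k \to \S^k/\partial D \cong \S^k \vee \S^k$ then yields $[U]_{\S^k} = 2n$, establishing evenness.

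For part 1, given $[U]_{\S^k} = 0$, the same Mayer-Vietoris identity forces $[U|_D] = 0$ --- by injectivity of multiplication by $2$ on $\Z$ when $k$ is odd, and automatically when $k$ is even since $\widetilde{K}_1(\S^k) = 0$. A relative nullhomotopy $H_t$ of $U|_D$ rel $\partial D$ then exists after stabilization, and extending by the symmetric formula $H_t(\gamma q) := H_t(q)^*$ assembles into a global constrained nullhomotopy of $U \oplus I_\bullet$ to $I_\bullet$. The base case $k = 1$, where $\partial D$ degenerates to the two poles and the boundary condition is already satisfied, reduces part 2 to the familiar winding-doubling for a map $\S^1 \to \mathcal{U}_N$ with $U(\bar z) = U(z)^*$ and $U(\pm 1) = I$, and part 1 to a direct nullhomotopy of the upper arc rel endpoints.

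The main obstacle is executing the inductive boundary reduction $U|_{\partial D} \leadsto I$ coherently: I must coordinate the stabilizations used at each inductive step, and I must extend the constrained nullhomotopy from $\partial D$ into a collar neighborhood of $\partial D$ in $\S^k$ without breaking the constraint anywhere else. Because $\gamma$ acts nontrivially on the collar, the extension has to be made $\gamma$-compatible in that tubular direction, and the constrained relative nullhomotopy on $D$ has to be glued symmetrically to the one on $\gamma(D)$; this bookkeeping of stabilizations and symmetric gluing is where the construction is most delicate.
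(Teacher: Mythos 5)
Your proposal is correct and follows essentially the same route as the paper's proof: induction on dimension, using part 1 of the inductive hypothesis to trivialize the equator data (the paper does this by first stretching the equator onto a band $|x_k|\leq \frac{1}{2}$ and running the constrained nullhomotopy as a function of $|x_k|$, which resolves the collar-extension issue you flag), followed by the hemisphere decomposition $V_1 = F\cdot\gamma(F^*)$, which is exactly your pinch/Mayer--Vietoris identity with the sign $(-1)^{k+1}$ accounting for the degrees of $\gamma$ and the adjoint. The only cosmetic differences are your base case $k=1$ versus the paper's $k=0$ and the phrasing of the $K_1$ sum as a wedge decomposition rather than a commuting product of unitaries supported on complementary hemispheres.
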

\begin{proof}
We proceed by induction; the only applicable unitaries over $C(\S^0)$ are the identity matrices, so the claim at dimension $0$ is automatically satisfied. If the claim holds for $k$, then suppose $U$ is a unitary matrix over $C(\S^{k + 1})$ with $\gamma(U) = U^*$ and $U|_{h = \pm 1} = I$. Let $\S^k$ denote the equator in $\S^{k+1}$ determined by $x_k = 0$ (noting $x_k$ is a coordinate negated by $\gamma$), so that the restriction $\widetilde{\gamma}$ of $\gamma$ to $C(\S^k)$ is an action of the same type: it fixes $h$ and negates $x_1, \ldots, x_{k-1}$. Also note that the fixed points $h = \pm 1$ are in this equator $\S^k$. To use the inductive hypothesis, first form a path of unitaries that \lq\lq stretches\rq\rq \hspace{0pt} the equator data of $U$. Realize $C(\S^{k + 1})$ as the unreduced suspension $\Sigma C(\S^k)$, with $x_k$ representing the path coordinate in $[-1, 1]$. Then $U = U(x_k)$ represents a path of unitary matrices over $C(\S^k)$, and we form a continuous path $U_t$ as follows.

\beu
U_t(x_k) = \left\{ \begin{array}{ccc} U\left(\frac{-2}{t-2}(x_k + 1) - 1 \right) & \textrm{if} & -1 \leq x_k \leq \frac{-t}{2} \\ U(0) & \textrm{if} & \frac{-t}{2} \leq x_k \leq \frac{t}{2} \\ U\left(\frac{-2}{t-2}(x_k - 1) + 1 \right) & \textrm{if} & \frac{t}{2} \leq x_k \leq 1 \end{array}\right.
\eeu

This path connects $U = U_0$ to $V = U_1$, which has its equator data repeated on a band neighborhood $-\frac{1}{2} \leq x_k \leq \frac{1}{2}$. The path $U_t$ also satisfies $\gamma(U_t) = U_t^*$ and still assigns the identity on $h = \pm 1$, as these points are in the equator $x_k = 0$. The equator function $U(0) = V(0)$ is trivial in $K_1(C(\S^k))$ because $\S^k$ sits inside a contractible subset of $\S^{k + 1}$; it also satisfies $\widetilde{\gamma}(U(0)) = U(0)^*$ and sends the fixed points at $h = \pm 1$ to the identity matrix. By the inductive hypothesis, there is a path $W_t$ connecting $W_0 = U(0) \oplus I$ to $W_1 = I$ within the unitaries over $C(\S^k)$ that also satisfy $\widetilde{\gamma}(W_t) = W_t^*$ and assign the fixed points $h = \pm 1$ to the identity matrix. Apply this path to the equator of $V \oplus I$ while maintaining the same values on the path for $|x_k| \geq \frac{1}{2}$.
\beu
V_t(x_k) = \left\{ \begin{array}{ccc} W_{\phi_t(x_k)} & \textrm{if} & |x_k| \leq \frac{1}{2} \\ V(x_k) \oplus I & \textrm{if} & \frac{1}{2} \leq |x_k| \leq 1 \end{array}\right.
\eeu
\beu
\phi_t(x_k) = \left\{ \begin{array}{cccc} -2|x_k| + t & \textrm{if} & |x_k| \leq \frac{t}{2} \\ 0 & \textrm{if} & \frac{t}{2} \leq |x_k| \leq \frac{1}{2} \end{array}\right.
\eeu

The path $V_t$ connects $V_0 = V \oplus I$ to $V_1$, where the unitaries $V_t$ still satisfy $\gamma(V_t) = V_t^*$, and by the inductive assumption, the fixed points $h = \pm 1$ of $\gamma$ are still always assigned the identity. Because $V_1$ also assigns the identity matrix on the entire equator, $V_1$ is the commuting product of two unitary matrices $F$ and $G$, where $F$ assigns the identity matrix for $x_k \geq 0$, $G$ assigns the identity matrix for $x_k \leq 0$, and $\gamma(F^*) = G$. How to proceeed is slightly different based on the parity of $k + 1$.

If $k + 1$ is even, then $K_1(C(\S^{k + 1}))$ is the trivial group, so let $F_t$ denote a path of unitaries connecting $F_0 = F \oplus I$ to $I$. Moreover, we can insist that $F_t$ always assigns the identity matrix on points with $x_k \geq 0$, as this region is contractible to a point and $K_1(C(\S^{k+1} \setminus \{\textrm{pt}\}))$ is also trivial. Then the commuting product $Q_t = F_t \cdot \gamma(F_t^*)$ forms a path of unitaries that connects $V_1 \oplus I$ to $I$, satisfies $\gamma(Q_t) = Q_t^*$, and assigns the identity matrix at (at least) $h = \pm 1$. Tracing back all of the paths used so far shows that $U \oplus I$ is connected to $I$ within the same set of restricted unitaries.

If $k + 1$ is odd, then $K_1(C(\S^{k + 1})) \cong \Z$, and as $\gamma$ and the adjoint are both orientation-reversing, $F$ and $G = \gamma(F^*)$ represent the same element in $K_1(C(\S^{k + 1}))$. The product $V_1 = F \cdot \gamma(F^*)$ shows that the class of $V_1$ (and therefore of $U$) in $K_1$ is $2[F]_{K_1}$, an even integer. If this integer is zero, then once again, $K_1(C(\S^{k + 1} \setminus \{\textrm{pt}\}))$ is isomorphic to $K_1(C(\S^{k + 1}))$, so just as in the previous paragraph, the trivial element $F \oplus I$ can be connected to $I$ in a path of unitaries $F_t$ that always assign the identity on points with $x_k \geq 0$. Finally, the commuting product $Q_t = F_t \cdot \gamma(F_t^*)$ connects $V_1 \oplus I$ to $I$, satisfies $\gamma(Q_t) = Q_t^*$, and always send the points $h = \pm 1$ to the identity matrix. A composition of paths establishes the same for $U$ and completes the induction.
\end{proof}

In the above theorem, the set of matrices $M$ satisfying $\gamma(M) = M^*$ is not a $C^*$-subalgebra of the matrix algebra over $C(\S^{k})$, as the adjoint operation reverses the order of multiplication. As such, some of the ideas in the proof are motivated by the six term exact sequence, but cannot be implemented this way. Mention of function values at the fixed points $h = \pm 1$ appears to be unnecessary on first glance, but this is crucial even at low dimensions.

\begin{exam}
Let $C(\S^1)$ be generated by the complex coordinate $z = x + iy$ with a $\Z_2$ action $\gamma$ that negates $y$ but fixes $x$. Then $\gamma(z) = z^*$, but $z$ is the generator of $K_1(C(\S^1))$, associated to the odd integer $1$. The fixed points are $x = 1$ and $x = -1$, and $z$ assigns these points to $\pm 1$. The previous theorem does not apply, as it only considers matrices which assign the identity matrix on the fixed points of $\gamma$. The same scenario happens in any $C(\S^{2n - 1})$ for the standard $K_1$ generator $Z(n)$.
\end{exam}

Theorem \ref{thm:gammaustar} concerns an action $\gamma$ that is not free, but it will be instrumental in defining an invariant for unitaries that satisfy $\alpha(U) = U^*$, where $\alpha$ is the antipodal map on $C(\S^{2n - 1})$. In turn, this invariant will show that the relation algebras $\fR^{2n}_\omega$ have a Borsuk-Ulam property for projections in $K_0$, by first proving the case when $z_1, \ldots, z_n$ all commute with each other. 

\begin{defn}
Fix an isomorphism $C(\S^{k}) \cong \Sigma C(\S^{k-1})$, with the path coordinate denoted by $h$. If $M \in M_p(C(\S^k))$ is such that $M(0) \in M_p(\C)$ (i.e., it has constant entries), then let $M^+ \in M_p(C(\S^k)) \cong M_p(\Sigma C(\S^{k-1}))$ be defined as follows for $h \in [-1, 1]$.
\beu
M^+(h) = M\left(\frac{h + 1}{2}\right)
\eeu
The matrix $M^+$ encodes information about $M$ only for points $h \geq 0$. Note in particular that $M$ is also only defined in reference to a particular, fixed coordinate $h$ and a suppressed isomorphism with an unreduced suspension. We do not expect that any $M^+$ that is definable with respect to multiple coordinate choices has any uniqueness properties of any kind.
\end{defn}

\begin{thm}\label{thm:unexpectedZ2}
Suppose $U_0, U_1 \in U_k(C(\S^{2n - 1}))$ are unitary matrices which assign an equator $h = 0$ to the identity matrix and satisfy $\alpha(U_j) = U_j^*$. If there is a path of unitary matrices $U_t$ connecting $U_0$ and $U_1$ that satisfy $\alpha(U_t) = U_t^*$ (but have no restriction on the equator), then the classes of $U_0^+$ and $U_1^+$ in $K_1(C(\S^{2n - 1})) \cong \Z$ differ by an even integer.
\end{thm}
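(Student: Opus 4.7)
The plan is to reduce the theorem to Theorem \ref{thm:gammaustar} by building, from the path $U_t$, a single unitary $\widetilde{M_0}$ over $C(\S^{2n-1})$ whose $K_1$ class realizes the difference $[U_0^+] - [U_1^+]$ and which carries the $\gamma$-symmetry required by that theorem.

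Writing $U_t$ in the suspension form of $C(\S^{2n-1}) \cong \Sigma C(\S^{2n-2})$ with coordinate $h$, the assumption $\alpha(U_t) = U_t^*$ becomes $\alpha'(U_t(-h)) = U_t(h)^*$, where $\alpha'$ denotes the antipodal map on $C(\S^{2n-2})$. Evaluating at $h = 0$, the family $M_0(t) := U_t(0)$ is a continuous loop in $U_k(C(\S^{2n-2}))$ based at $I$ (since $U_0(0) = U_1(0) = I$) which satisfies $\alpha'(M_0(t)) = M_0(t)^*$. Reparameterizing $t$ to lie in $[-1,1]$ produces $\widetilde{M_0} \in U_k(C(\S^{2n-1}))$, now using a \emph{second} unreduced suspension realization with $t$ as the suspension coordinate. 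For the $\Z_2$ action $\gamma$ on this $C(\S^{2n-1})$ which fixes $t$ and negates the $\S^{2n-2}$ coordinates, a direct computation gives $\gamma(\widetilde{M_0}) = \widetilde{M_0}^*$, and by construction $\widetilde{M_0}|_{t = \pm 1} = I$. Theorem \ref{thm:gammaustar}(2), applied with sphere dimension $2n-1$ (odd), then yields $[\widetilde{M_0}] \in 2\Z$.

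To finish, I would establish $[U_0^+] - [U_1^+] = [\widetilde{M_0}]$ in $K_1(C(\S^{2n-1}))$ using the two-dimensional family $V(t, h) := U_t(h)$ restricted to $R := [0,1] \times [0,1]$. Traversing $\partial R$ counterclockwise from $(0,0)$ yields four arcs in $U_k(C(\S^{2n-2}))$: the path $U_0^+$ up the left side (from $I$ to $U_0(1)$), the scalar path $c_t := V(t,1)$ along the top, the time-reverse of $U_1^+$ down the right, and the time-reverse of $\widetilde{M_0}$ along the bottom. Since $V|_R$ provides a filling of the contractible square, the concatenated boundary loop is null-homotopic in $U_k(C(\S^{2n-2}))$. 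The scalar arc $c$ factors through the contractible subalgebra $C([-1,1]) \hookrightarrow C(\S^{2n-1})$ of purely $h$-dependent functions, so its $K_1$ class vanishes, and additivity of concatenation in $K_1$ then gives $[U_0^+] - [U_1^+] - [\widetilde{M_0}] = 0$.

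The main obstacle is this final identification: the null-homotopy of the boundary loop in $U_k(C(\S^{2n-2}))$ must be cashed in as the additive relation $[U_0^+] + [c] - [U_1^+] - [\widetilde{M_0}] = 0$ in $K_1(C(\S^{2n-1}))$, even though the constituent arcs are paths rather than based loops. The resolution—stabilization together with the Eckmann--Hilton equivalence between concatenation and pointwise multiplication for loops based at the identity—is standard, but the non-identity scalar boundary values $U_0(1), U_1(1)$ and the switch between two different suspension realizations of $C(\S^{2n-1})$ (one with coordinate $h$, one with coordinate $t$) require careful bookkeeping.
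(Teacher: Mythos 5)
Your proposal is correct and takes essentially the same route as the paper: both arguments apply Theorem \ref{thm:gammaustar} (in the odd case, with $\gamma$ fixing the suspension coordinate) to the unreduced suspension of the equator loop $t \mapsto U_t(0)$ --- your $\widetilde{M_0}$ is, up to reparametrization, the paper's unitary $G$ --- and both use the two-parameter family $U_t(h)$ over the square to identify $[U_1^+] - [U_0^+]$ with that even class. The paper simply carries out the boundary/concatenation bookkeeping you flag as the main obstacle in a more concrete way, sweeping an explicit L-shaped homotopy $V_t$ from $U_1^+$ to a commuting pointwise product $F \cdot G$ with $[F]_{K_1} = [U_0^+]_{K_1}$, which converts concatenation into multiplication directly and avoids any appeal to Eckmann--Hilton.
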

\begin{proof}
Fix an isomorphism $C(\S^{2n - 1}) \cong \Sigma C(\S^{2n - 2})$, with the path coordinate specified by $h$, and let $\gamma$ be a $\Z_2$ action on $C(\S^{2n - 1})$ that fixes $h$ but applies the antipodal map pointwise on $C(\S^{2n - 2})$. Define $V_t \in U_k(C(\S^{2n - 1}))$ as follows.
\beu
V_t(h) = \left\{\begin{array}{cccc} U_{-h}(0) & \textrm{if} & -1 \leq h \leq -t \\ U_t(\frac{h - 1}{1 + t} + 1) & \textrm{if} & -t \leq h \leq 1 \end{array} \right.
\eeu

Note that $V_t(-1) = U_1(0)$ is always the identity matrix and $V_t(1) = U_t(1)$ is a constant matrix, meeting the necessary boundary conditions to define a unitary over the sphere. Further, $V_1$ is equal to $U_1^+$, and $V_0$ contains data of $U_0^+$ with equator information of $U_t$.
\beu
V_0(h) = \left\{\begin{array}{cccc} U_{-h}(0) & \textrm{if} & -1 \leq h \leq 0 \\ U_0(h) & \textrm{if} & 0 \leq h \leq 1 \end{array} \right.
\eeu
Since $U_0(0)$ is the identity matrix, this formula shows that $V_0$ can be written as a commuting product of two unitaries $F \cdot G$, as follows.
\beu
F(h) = \left\{\begin{array}{cccc} I & \textrm{if} & -1 \leq h \leq 0 \\ U_0(h) & \textrm{if} & 0 \leq h \leq 1 \end{array} \right. \hspace{1 cm} \implies \hspace{1 cm} [F]_{K_1} = [U_0^+]_{K_1}
\eeu
\beu
G(h) = \left\{\begin{array}{cccc} U_{-h}(0) & \textrm{if} & -1 \leq h \leq 0 \\ I & \textrm{if} & 0 \leq h \leq 1 \end{array} \right.
\eeu

Now, the path matrices $U_t$ satisfy $\alpha(U_t) = U_t^*$, and the matrix $G$ contains equator data from $U_t$, so it satisfies $\gamma(G) = G^*$ where $\gamma$ negates every coordinate in $C(\S^{2n - 1})$ except $h$. That is, $\gamma$ applies the antipodal map of $C(\S^{2n - 2})$ pointwise on $\Sigma C(\S^{2n - 2})$. $G$ also assigns the fixed points of $\gamma$, $h = \pm 1$, to the identity matrix, so the class of $G$ in $K_1(C(\S^{2n - 1}))$ is an even integer by Theorem \ref{thm:gammaustar}. The formulas $V_0 = F \cdot G$ and $[F]_{K_1} = [U_0^+]_{K_1}$ imply that $[V_0]_{K_1}$ is an even integer away from $[U_0^+]_{K_1}$, and $[V_0]_{K_1}$ is the same as $[V_1]_{K_1} = [U_1^+]_{K_1}$.
\end{proof}

The above theorem defines a $\Z_2$ invariant for any unitary over $C(\S^{2n - 1})$ satisfying $\alpha(U) = U^*$ which assigns the identity on a specified equator $h = 0$: $[U^+]_{K_1} \textrm{ mod } 2$. This invariant is preserved by paths $U_t$ satisfying $\alpha(U_t) = U_t^*$ where $U_0$ and $U_1$ assign the identity on the equator. It is important to note that only the endpoints of the path have specified equator data; if the equators of $U_t$ were also the identity, $U_t^+$ would be a well-defined path and the $K_1$ classes of $U_0^+$ and $U_1^+$ would be equal, with no need for reducing mod $2$. Some examples of unitary paths with $\alpha(U_t) = U_t^*$ will be necessary for later calculations.

\begin{exam}\label{exam:identitypaththing}
Fix an isomorphism $C(\S^{2n - 1}) \cong \Sigma C(\S^{2n - 2})$ with path coordinate $x_1 = \textrm{Re}(z_1)$, and consider the standard $K_1$ generator $Z(n) \in U_{2^{n - 1}}(C(\S^{2n - 1}))$. Define a unitary $V \in U_{2^{n - 1}}(C(\S^{2n - 1}))$ as follows.
\beu
V(x_1) = -Z(n)[2|x_1| - 1]
\eeu
Now, $V$ has $V(\pm 1) = -Z(n)[1] = -I$ and $V(0) = -Z(n)[-1] = I$, and the antipodal map $\alpha$ on $C(\S^{2n - 1})$ comes from applying $x_1 \mapsto -x_1$ and the pointwise antipodal map $\widetilde{\alpha}$ on $C(\S^{2n - 2})$.
\beu
\alpha(V)[x_1] = \widetilde{\alpha}(V(-x_1)) = \widetilde{\alpha}(V(x_1)) = \widetilde{\alpha}(-Z(n)[2|x_1| - 1])
\eeu
The application of $\widetilde{\alpha}$ to $-Z(n)[2|x_1| - 1]$ negates $y_1 = \textrm{Im}(z_1)$ and $z_2, \ldots, z_n$, which produces $(-Z(n)[2|x_1| - 1])^*$ by an inductive argument, so $\widetilde{\alpha}(V(x_1)) = V(x_1)^*$ for each $x_1$. Finally, the above equation implies that $\alpha(V) = V^*$. Since $V(0) = I$ and $V^+ = -Z(n)$, it follows that $[V^+]_{K_1} = 1$. Moreover, $V$ is connected to $-I_{2^{n - 1}}$ within the unitaries satisfying $\alpha(M) = M^*$ by considering the following path $V_t$.
\beu
V_t(x_1) = V((1 -t)|x_1| + t)
\eeu
Now, $V_0(x_1) = V(|x_1|) = V(x_1)$, so $V_0 = V$, and $V_1(x_1) = V(1) = -I_{2^{n - 1}}$.
\beu \ba
\alpha(V_t)[x_1] 	&= \widetilde{\alpha}(V_t(-x_1)) \\
			&= \widetilde{\alpha}(V_t(x_1)) \\
			&= \widetilde{\alpha}(V((1-t)|x_1| + t)) \\
			&= V((1-t)|x_1| + t)^* \\
			&= V_t(x_1)^* \\
\ea \eeu
Finally, $V$ is connected to $-I_{2^{n-1}}$ within the unitary matrices satisfying $\alpha(M) = M^*$.
\end{exam}

\begin{exam}\label{exam:antioddpaththing}
Fix an isomorphism $C(\S^{2n - 1}) \cong \Sigma C(\S^{2n - 2})$ with path coordinate $x_1$. Suppose $U_0 \in U_k(C(\S^{2n - 1}))$ is anti self-adjoint ($U_0^* = -U_0$) and odd $(\alpha(U_0) = -U_0)$, which implies that $\alpha(U_0) = U_0^*$. Define a path $U_t$ as follows.

\beu
U_t(x_1) = \left\{\begin{array}{cccc} \widetilde{\alpha}(U_0(\frac{2}{2 - t}(-x_1 - 1) + 1)^*) & \textrm{ if } & -1 \leq x_1 \leq -\frac{t}{2} \\  (t + 2x_1)I + \sqrt{1 - (t + 2x_1)^2}\hspace{2 pt}\widetilde{\alpha}(U_0(0)^*) & \textrm{if}& -\frac{t}{2} \leq x_1 \leq 0  \\ (t - 2x_1)I + \sqrt{1 - (t - 2x_1)^2} \hspace{2 pt} U_0(0) & \textrm{if} & 0 \leq x_1 \leq \frac{t}{2} \\ U_0(\frac{2}{2 - t}(x_1 - 1) + 1) & \textrm{if} & \frac{t}{2} \leq x_1 \leq 1 \end{array} \right.
\eeu

Note that $U_0(0)$ is anti self-adjoint and odd under the antipodal action $\widetilde{\alpha}$ on $C(\S^{2n - 2})$, so $U_t$ is well-defined and unitary with $\alpha(U_t) = U_t^*$. Let $U_1 = W_0$ and define another path of unitaries $W_t$ which have $\alpha(W_t^*) = W_t$ and also assign the equator to $I$.
\beu
W_t(x_1) = \left\{ \begin{array}{cccc} \widetilde{\alpha}(U_0(-2x_1 - 1)^*) & \textrm{if} & -1 \leq x_1 \leq -\frac{1 + t}{2}  \\ (1 + \frac{2}{1 + t}x_1) I + \sqrt{1 - (1 + \frac{2}{1 + t}x_1)^2} \hspace{2 pt} \widetilde{\alpha}(U_0(t)^*) & \textrm{if} & -\frac{1 + t}{2} \leq x_1 \leq 0 \\ (1 -\frac{2}{1 + t}x_1) I + \sqrt{1 - (1 -\frac{2}{1 + t}x_1)^2} \hspace{2 pt} U_0(t) & \textrm{if} & 0 \leq x_1 \leq \frac{1 + t}{2} \\ U_0(2x_1 - 1) & \textrm{if} & \frac{1 + t}{2} \leq x_1 \leq 1 \end{array} \right.
\eeu
Note that the formula defining $W_t$ produces a unitary matrix because each $U_0(t)$ is anti self-adjoint, as is each $\widetilde{\alpha}(U_0(t)^*)$. Finally, $W_1(0) = I$, and because $U_0(1)$ has scalar entries, $W_1(x_1)$ has scalar entries for each $x_1$, which implies that $[W_1^+]_{K_0} = 0$.

\end{exam}

Consider the $C^*$-algebra $\fR^{2n} \cong \Sigma^\alpha C(\S^{2n - 1})$, in which $z_1, \ldots, z_n$ commute with each other. Because the antipodal map $\mathfrak{a}$ is implemented as $\alpha \widehat{\alpha}$ pointwise on $C(\S^{2n - 1}) \rtimes_\alpha \Z_2$, an element or matrix $F(t) + G(t)\delta$ over $\fR^{2n}_\rho$ is odd if and only if each $F(t)$ is odd in $C(\S^{2n - 1})$ and each $G(t)$ is even in $C(\S^{2n - 1})$, which implies that $G(t)$ commutes with $\delta$ and $F(t)$ anticommutes with $\delta$. An odd matrix $F(t) + G(t)\delta$ is then self-adjoint if and only if $F(t)$ and $G(t)$ are self-adjoint. If $F(t) + G(t)\delta$ is also unitary, then it satisfies $(F(t) + G(t)\delta)^2 = 1$, which places tight restrictions on $F$ and $G$.
\be\label{eq:pathoddtostar} \ba
(F(t) + G(t)\delta)^2 	&= F(t)^2 + (G(t)\delta)^2 + F(t)G(t)\delta + G(t)\delta F(t)\\
				&= (F(t)^2 + G(t)^2) + (F(t)G(t) - G(t)F(t))\delta \\
				&= 1 + 0\delta
\ea \ee
This implies that the self-adjoint elements $F(t)$ and $G(t)$ must commute and satisfy $F(t)^2 + G(t)^2 = 1$. In other words, $U(t) = G(t) + iF(t)$ is a unitary matrix over $C(\S^{2n - 1})$ for each $t$, and because $G(t)$ is even and $F(t)$ is odd, $\alpha(U(t)) = U(t)^*$. 

\begin{thm}\label{thm:K0BUcomm-1}
Let $\alpha$ denote the antipodal action on the commutative sphere $C(\S^{2n - 1})$ and consider the path algebra $\fR^{2n}= \Sigma^\alpha C(\S^{2n - 1})$ with antipodal action $\mathfrak{a} = \alpha \widehat{\alpha}$. If $P \in M_{(2m + 1)2^{n}}(\fR^{2n})$ is a projection with $\mathfrak{a}(P) = I - P$, then $[P]_{K_0}$ is not in the subgroup generated by trivial projections.
\end{thm}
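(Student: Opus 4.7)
The plan is to assume that $[P]_{K_0}$ lies in the trivial subgroup, translate the equivariant projection data into a path $U:[0,1] \to U_k(C(\S^{2n-1}))$ of $\alpha$-adjoint unitaries, extend $U$ at both endpoints using the building blocks of Examples \ref{exam:identitypaththing} and \ref{exam:antioddpaththing}, and derive a contradiction from Theorem \ref{thm:unexpectedZ2} via a parity computation.

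By the preceding proposition, $K_0(\fR^{2n})$ is identified with $\textrm{span}_\Z\{(1,1,1),(2^{n-1},2^n,0)\}$ inside $\Z^3$, so the trivial subgroup is $\Z\cdot(1,1,1)$. The hypothesis $\mathfrak{a}(P) = I - P$ forces $\textrm{Rank}_{C(\S^{2n-1})}(P(0)) = k/2$ and $\textrm{Rank}_{\delta = 1}(P(1)) + \textrm{Rank}_{\delta = -1}(P(1)) = k$, so if in addition $[P]$ is in the trivial subgroup, then all three numerical invariants equal $j := (2m+1)2^{n-1}$. Setting $B = 2P - I$ and writing $B(t) = F(t) + G(t)\delta$ via (\ref{eq:antipaths}), the analysis surrounding (\ref{eq:pathoddtostar}) gives that $U(t) := G(t) + iF(t)$ is unitary in $M_k(C(\S^{2n-1}))$ with $\alpha(U(t)) = U(t)^*$ for every $t$. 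At $t=0$ we have $G(0)=0$ and $F(0)$ self-adjoint odd unitary, so $U(0) = iF(0)$ is anti-self-adjoint and odd; at $t=1$ the requirement $\alpha(F(1)) = -F(1)$ on a scalar forces $F(1)=0$, so $U(1) = G(1)$ is a scalar self-adjoint unitary whose $(\pm 1)$-eigenspaces both have rank $j$.

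At the $t=0$ end, apply Example \ref{exam:antioddpaththing} to $U(0)$ to obtain a path of $\alpha$-adjoint unitaries from $U(0)$ to a matrix $W_1$ whose entries depend only on $x_1$ and which satisfies $W_1(0) = I$; the scalar-entry property gives $[W_1^+]_{K_1} = 0$. At the $t=1$ end, first connect $U(1)$ to $I_j \oplus (-I_j)$ by a path of scalar self-adjoint unitary conjugations, which are automatically $\alpha$-adjoint because $\alpha$ is trivial on $\C$. Next decompose $-I_j = (-I_{2^{n-1}})^{\oplus(2m+1)}$ and apply Example \ref{exam:identitypaththing} simultaneously to each summand, producing an $\alpha$-adjoint path from $I_j \oplus (-I_j)$ to $I_j \oplus V^{\oplus(2m+1)}$; this endpoint satisfies $\bigl(I_j \oplus V^{\oplus(2m+1)}\bigr)(0) = I_k$ and has $K_1^+$-class $(2m+1)\cdot[V^+]_{K_1} = 2m+1$.

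Concatenating the three segments yields a single path of $\alpha$-adjoint unitaries in $U_k(C(\S^{2n-1}))$ from $W_1$ to $I_j \oplus V^{\oplus(2m+1)}$, with both endpoints assigning the equator $x_1 = 0$ to the identity. Theorem \ref{thm:unexpectedZ2} then forces the two $K_1^+$ classes to differ by an even integer, but the actual difference is $(2m+1) - 0$, which is odd, producing the desired contradiction. The main obstacle is the bookkeeping across the splicings: one must check that the scalar conjugation, the direct-sum stabilization used to apply Example \ref{exam:identitypaththing} to all $(2m+1)$ copies of $-I_{2^{n-1}}$, and the two pre-existing example paths all stay inside the set of $\alpha$-adjoint unitaries of dimension $k = (2m+1)2^n = 2j$, and it is precisely the odd factor $(2m+1)$ built into the dimension hypothesis that makes the final parity calculation $2m+1 \not\equiv 0 \pmod 2$ produce the contradiction.
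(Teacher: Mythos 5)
Your proposal is correct and follows essentially the same route as the paper's own proof: convert $B = 2P - I$ into a path of unitaries $U(t) = G(t) + iF(t)$ with $\alpha(U(t)) = U(t)^*$, use Example \ref{exam:antioddpaththing} at $t = 0$ and Example \ref{exam:identitypaththing} (applied to the $(2m+1)$ blocks $-I_{2^{n-1}}$) at $t = 1$ to arrange identity equator data with $K_1^+$ classes $0$ and $2m+1$ respectively, and contradict the mod-$2$ invariant of Theorem \ref{thm:unexpectedZ2}. The endpoint analysis ($G(0) = 0$, $F(1) = 0$, equal $\delta = \pm 1$ ranks under the triviality assumption) and the final parity count match the paper's argument.
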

\begin{proof}
Write $P = \frac{1}{2}I + \frac{1}{2}B$ where $B$ is an odd, self-adjoint unitary matrix. By the calculation (\ref{eq:pathoddtostar}), $B(t) = F(t) + G(t)\delta$ produces a path of unitaries $U(t) = G(t) + i F(t) \in U_{(2m + 1)2^n}(C(\S^{2n - 1}))$ with $\alpha(U(t)) = U(t)^*$. Since $B(1)$ is a matrix over $\C + \C\delta$, but $F(1)$ must be odd, it follows that $F(1) = 0$ and $B(1) = G(1)\delta$ where $G(1)$ is a self-adjoint matrix with constant entries. Moreover, the boundary conditions of $\Sigma^\alpha C(\S^{2n - 1})$ imply that $G(0) = 0$, so $U(0) = i F(0)$ is an anti self-adjoint odd unitary. If $[P]_{K_0}$ is in the subgroup generated by trivial projections, then the ranks $\textrm{Rank}_{\delta = \pm 1}(P(1))$ are equal, so $U(1) = G(1)$ is a self-adjoint, unitary matrix over $\C$ whose eigenspaces for eigenvalues $\pm 1$ are of equal dimension. It follows that $U(1)$ may be connected within the self-adjoint, unitary matrices over $\C$ to $I_{(2m + 1)2^{n - 1}} \oplus -I_{(2m + 1)2^{n - 1}}$, and the matrices in this path satisfy $\alpha(M) = M = M^*$.

Because $U(0)$ is an anti self-adjoint, odd, unitary matrix, by Example \ref{exam:antioddpaththing}, $U(0)$ is connected via a path of unitaries satisfying $\alpha(M) = M^*$ to a matrix $W$ with identity on the equator and $[W^+]_{K_1} \cong 0 \textrm{ mod } 2$. Similarly, since $U(1)$ is connected via a path of unitaries satisfying $\alpha(M) = M^*$ to $I_{(2m + 1)2^{n - 1}} \oplus -I_{(2m + 1)2^{n - 1}}$, repeated use of Example \ref{exam:identitypaththing} for $(2m + 1)$ summands of $-I_{2^{n - 1}}$ shows $U(1)$ is also connected to a matrix $V$ such that $V$ assigns the identity on the equator and $[V^+]_{K_1} \cong (2m + 1) \cong 1 \textrm{ mod } 2$. This contradicts Theorem \ref{thm:unexpectedZ2}, as $V$ and $W$ assign the identity on the equator and are connected via a path of unitaries satisfying $\alpha(M) = M^*$ (with no assumption on the path's equator data), even though their invariants $[V^+]_{K_1} \textrm{ mod } 2$ and $[W^+]_{K_1} \textrm{ mod } 2$ are different.
\end{proof}

All that remains is to remove the assumption that $z_1, \ldots, z_n$ commute with each other. First, note that the expansion map $E_\alpha: C(\S^{2n - 1}_\omega) \rtimes_\alpha \Z_2 \to M_2(C(\S^{2n - 1}_\omega))$ shows that the relation algebra $\fR^{2n}_\omega = \Sigma^\alpha C(\S^{2n - 1}_\omega)$ embeds into $M_2(C(\S^{2n}_\omega))$, as follows.
\beu \ba
\Sigma^\alpha C(\S^{2n - 1}_\omega) 	&= \{f \in C([0,1], C(\S^{2n - 1}_\omega) \rtimes_\alpha \Z_2): f(0) \in C(\S^{2n - 1}_\omega), f(1) \in \C + \C \delta\} \\
						&\cong \{f \in C([-1,1], C(\S^{2n - 1}_\omega) \rtimes_\alpha \Z_2): f(-1) \in C(\S^{2n - 1}_\omega), f(1) \in \C + \C \delta\} \\
						&\cong \{f \in C([-1, 1], M_2(C(\S^{2n - 1}_\omega))): f(t) = \left[\begin{array}{cc} g(t) & h(t) \\ \alpha(h(t)) & \alpha(g(t))  \end{array} \right] \textrm{ for all } t, \\
						&\phantom{\cong \{ f} h(-1) = 0, \textrm{and } g(1), h(1) \in \C\} \\
						&\leq  M_2(\Sigma C(\S^{2n - 1}_\omega)) = M_2(C(\S^{2n}_\omega))
\ea \eeu

Denote this subalgebra of $M_2(C(\S^{2n}_\omega))$ isomorphic to $\fR^{2n}_\omega$ as $B_\omega$. The antipodal action $\mathfrak{a}$ on $B_\omega$ is realized as $\left[\begin{array}{cc} g(t) & h(t) \\ \alpha(h(t)) & \alpha(g(t))  \end{array} \right] \mapsto \left[\begin{array}{cc} \alpha(g(t)) & -\alpha(h(t)) \\ -h(t) & g(t) \end{array} \right]$, which is entrywise application of $\alpha$ and conjugation by $\left[\begin{array}{cc} 1 & 0 \\ 0 & -1  \end{array} \right]$. The spheres $C(\S^{2n}_\omega)$ are $\theta$-deformations of $C(\S^{2n})$, so if $f$ and $g$ are fixed (matrices of) smooth elements, the following continuity properties apply, as weak forms of strong quantization (\cite{ri93}). Here $\mathcal{C_\omega}$ denotes the collection of parameter matrices $\rho$ which differ from $\omega$ only in one prescribed pair of conjugate entries, $(i, j)$ and $(j, i)$.
\beu
||f \cdot_\omega g - f \cdot_\rho g||_\rho \to 0 \textrm{ as } \rho \to \omega \textrm{ within } \mathcal{C}_\omega
\eeu
\beu
||f||_\rho \to ||f||_\omega \textrm{ as }  \rho \to \omega \textrm{ within } \mathcal{C}_\omega
\eeu

Any smooth approximations in $M_2(C(\S^{2n}_\omega))$ to an element of $B_\omega$ can be made without leaving $B_\omega$. Note that the pointwise action $\alpha$ on $\Sigma C(\S^{2n - 1}_\omega) = C(\S^{2n}_\omega)$ is an action $\gamma$ on $C(\S^{2n}_\omega)$ which fixes $x$ but negates every generator $z_i$, so $\gamma$ commutes with the rotation action of $\R^n$ defining the Rieffel deformation. Moreover, because the antipodal map on $B_\omega$ is implemented with an entrywise action and a conjugation, a matrix function $\left[\begin{array}{cc} g(t) & h(t) \\ \alpha(h(t)) & \alpha(g(t))  \end{array} \right] = \left[\begin{array}{cc} g & h \\ \gamma(h) & \gamma(g) \end{array}\right]$ in $B_\omega$ is odd if and only if $\gamma(g) = -g$ and $\gamma(h) = h$. Together, these observations imply that smooth approximations to $B_\omega$ elements may be made in $B_\omega$ while preserving homogeneity properties in the antipodal action. Also, as the adjoint operations of Rieffel deformations all have the same effect on smooth elements, smooth approximations can also be made to preserve self-adjointness.

\begin{thm}
Suppose $P \in M_{(2m + 1)2^n}(\fR^{2n}_\omega)$ is a projection with $\mathfrak{a}(P) = I - P$. Then $[P]_{K_0}$ is not in the subgroup generated by the trivial projections.
\end{thm}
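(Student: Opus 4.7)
The plan is to reduce to the commutative case of Theorem \ref{thm:K0BUcomm-1} by a Rieffel deformation argument, transporting $P$ (and the condition $\mathfrak{a}(P) = I - P$) continuously from the parameter $\omega$ to the all-ones parameter matrix. The embedding $\fR^{2n}_\omega \cong B_\omega \subseteq M_2(C(\S^{2n}_\omega))$ is the crucial device: it realizes each element of $\fR^{2n}_\omega$ as a matrix function in the Rieffel-deformed algebra $C(\S^{2n}_\omega)$, so the smooth calculus and the continuity estimates recalled just above apply directly. Since the antipodal action $\mathfrak{a}$ is implemented on $B_\omega$ by an entrywise pointwise action $\gamma$ together with conjugation by $\mathrm{diag}(1,-1)$, and since $\gamma$ commutes with the $\mathbb{R}^n$-rotation action driving the Rieffel deformation, the property of being odd is intrinsic to the underlying smooth matrix functions and is preserved as $\omega$ varies.

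First I would replace $P$ by a smooth odd self-adjoint projection in the same $K_0$ class. Take any smooth self-adjoint approximation $\widetilde{P}_0$ of $P$ in $B_\omega$, and symmetrize: set $\widetilde{P} := \tfrac{1}{2}\bigl(\widetilde{P}_0 + I - \mathfrak{a}(\widetilde{P}_0)\bigr)$, which is still smooth, self-adjoint, close to $P$, and now satisfies $\mathfrak{a}(\widetilde{P}) = I - \widetilde{P}$. Since the spectrum of $\widetilde{P}$ is clustered near $\{0,1\}$, holomorphic functional calculus with a function $f$ symmetric about $1/2$ (equal to $0$ near $0$ and $1$ near $1$) produces a projection $Q$ in $B_\omega$ close to $P$, and $\mathfrak{a}(Q) = f(\mathfrak{a}(\widetilde{P})) = f(I - \widetilde{P}) = I - f(\widetilde{P}) = I - Q$. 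Because the smooth subalgebra is closed under holomorphic functional calculus, $Q$ remains smooth, and $[Q]_{K_0} = [P]_{K_0}$.

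Now deform $\omega$ to the all-ones matrix along a path of parameter matrices, broken into finitely many short steps within successive neighborhoods $\mathcal{C}_{\omega'}$. View the same smooth matrix function $Q$ as an element of $B_\rho$ for every $\rho$ on the path; oddness is preserved by the observation above. For $\rho$ sufficiently close to $\omega$, the continuity estimates $\|Q \cdot_\omega Q - Q \cdot_\rho Q\|_\rho \to 0$ and $\|\cdot\|_\rho \to \|\cdot\|_\omega$ force $\|Q \cdot_\rho Q - Q\|_\rho$ to be small, so holomorphic functional calculus with a symmetric $f$ delivers a smooth odd self-adjoint projection $Q_\rho$ in $B_\rho$ whose class corresponds to $[Q]_{K_0}$ under the Rieffel $K$-theory isomorphism $K_0(\fR^{2n}_\omega) \cong K_0(\fR^{2n}_\rho)$. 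Trivial projections have constant scalar entries and remain trivial under the deformation, so the trivial subgroup is preserved at each step. Iterating along the path yields a smooth odd projection $P'$ in $M_{(2m+1)2^n}(\fR^{2n})$ whose class lies in the trivial subgroup of $K_0(\fR^{2n})$ if and only if $[P]_{K_0}$ lies in the trivial subgroup of $K_0(\fR^{2n}_\omega)$, and Theorem \ref{thm:K0BUcomm-1} applied to $P'$ supplies the contradiction.

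The main obstacle is ensuring that both the projection equation and the oddness condition $\mathfrak{a}(Q_\rho) = I - Q_\rho$ survive each functional calculus step simultaneously; this is handled by always using a function $f$ symmetric about $1/2$, together with the compatibility of $\gamma$ with the rotation action that generates the Rieffel deformation, which guarantees that the symmetrization, the smooth subalgebra, and the oddness condition all cohabit inside $B_\rho$ for every parameter on the deformation path.
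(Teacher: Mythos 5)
Your first half tracks the paper closely: smoothing inside $B_\omega$, symmetrizing so that $\mathfrak{a}(Q) = I - Q$ survives, and observing that oddness is a property of the underlying smooth matrix function that persists under a change of parameter because $\gamma$ commutes with the rotation action. The gap is in the deformation itself. The continuity estimates $\|f\cdot_\omega g - f\cdot_\rho g\|_\rho \to 0$ are local: they show that failure of the theorem is an \emph{open} condition in the prescribed entry of the parameter matrix, with a neighborhood whose radius depends on the finitely many smooth approximants in hand --- not only the projection $Q$ but also a smoothed chain $Q_0, \ldots, Q_k$ approximating the path that witnesses membership of $[P]$ in the trivial subgroup. (That membership must be transported by transporting the witnessing homotopy; knowing that trivial projections stay trivial does not by itself show that $[Q_\rho]$ stays in the trivial subgroup.) After each step all of this data must be rebuilt around the new parameter, so there is no a priori lower bound on the step size and no guarantee that finitely many steps carry an entry from $\omega_{ij}$ all the way to $1$. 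The appeal to ``the Rieffel $K$-theory isomorphism'' does not close this: the abstract existence of an isomorphism does not show it is induced by your pointwise correspondence $Q \mapsto Q_\rho$, nor that it respects oddness and the trivial subgroup, which is precisely what has to be proved.

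The paper avoids all of this by never deforming to the all-ones matrix. One openness step per entry pair suffices to land the prescribed entry on an odd-order root of unity (these are dense in the circle), and after finitely many pairs one reaches a parameter $\mu$ whose entries are all odd-order roots of unity. The residual noncommutativity is then absorbed by finite-dimensional unitaries $V_1, \ldots, V_n \in U_{2q+1}(\C)$ with $V_kV_j = \mu_{jk}V_jV_k$, giving an equivariant unital $*$-homomorphism $\fR^{2n}_\mu \to M_{2q+1}(\fR^{2n})$, $z_j \mapsto z_jV_j$, $x \mapsto xI_{2q+1}$. The image of $P$ has dimension $(2q+1)(2m+1)2^n$, still an odd multiple of $2^n$ precisely because the roots of unity have odd order, so Theorem \ref{thm:K0BUcomm-1} applies. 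This representation step is the essential idea missing from your proposal; without it the hypotheses of Theorem \ref{thm:K0BUcomm-1} can only be met by completing the full deformation, which is the unjustified part.
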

\begin{proof}
Suppose the theorem fails for $P \in M_{(2m + 1)2^n}(\fR^{2n}_\omega)$, so there is a path of projections connecting $P \oplus 0 \oplus I$ to a trivial projection $I \oplus 0$. If this path is viewed under the isomorphism $\fR^{2n}_\omega \cong B_\omega \leq M_2(C(\S^{2n}_\omega))$, then any matrix over $B_\omega$ can be approximated by matrices with smooth entries, and this approximation can be done without leaving $B_\omega$. Moreover, if the original matrix is odd or self-adjoint, these properties can be preserved in the approximation. Any smooth matrix over $B_\omega \leq M_2(C(\S^{2n - 1}_\omega))$ may then be viewed as a matrix over $B_\rho \leq M_2(C(\S^{2n - 1}_\rho))$ for any $\rho$.

The path of projections connecting $P \oplus 0 \oplus I$ to $I \oplus 0$ is uniformly continuous, so let $P_0, \ldots, P_k$ be finitely many elements of this path with the following properties.
\beu
P_0 = P \oplus 0 \oplus I \hspace{1 cm} ||P_j - P_{j - 1}||_\omega < \varepsilon \hspace{1 cm} P_k = I  \oplus 0
\eeu
Fix $0 < \varepsilon < \frac{1}{13}$, choose a self-adjoint smooth approximation $Q$ to $P$ such that $\mathfrak{a}(Q) = I - Q$ and $||Q \cdot_\omega Q - Q||_\omega < \varepsilon$, and let $Q_0 = Q \oplus 0 \oplus I$. It follows that $||Q_0 \cdot_\omega Q_0 - Q_0||_\omega < \varepsilon$. The matrix $P_k = I \oplus 0$ is already smooth, so let $Q_k = P_k$. Finally, choose smooth approximations $Q_1, \ldots, Q_{k - 1}$ of $P_1, \ldots, P_{k - 1}$ so that these restrictions hold for each $j$.
\beu
Q_0 = Q \oplus 0 \oplus I \hspace{.6 cm} Q_j = Q_j^* \hspace{.6 cm} ||Q_j \cdot_\omega Q_j - Q_j||_\omega < \varepsilon \hspace{.6 cm} ||Q_j - Q_{j - 1}||_\omega < \varepsilon \hspace{.6 cm} Q_k = I \oplus 0
\eeu

If $\rho$ is another parameter matrix differing from $\omega$ only in a prescribed pair of conjugate entries, then if $\rho$ is close enough to $\omega$, similar inequalities hold.
\beu
Q_0 = Q \oplus 0 \oplus I \hspace{.6 cm} Q_j = Q_j^* \hspace{.6 cm} ||Q_j \cdot_\rho Q_j - Q_j||_\rho < \varepsilon \hspace{.6 cm} ||Q_j - Q_{j - 1}||_\rho < \varepsilon \hspace{.6 cm} Q_k = I \oplus 0
\eeu
Let $R_t$, $t \in [0, 1]$, denote the piecewise linear path connecting $Q_0, \ldots, Q_k$, so each $R_t$ is self-adjoint and has $||R_t||_\rho \leq \textrm{max}\hspace{3 pt}\{||Q_j||_\rho: j \in \{0, \ldots, k\} \} \leq 1 + \varepsilon $. Further, for any $t$ there is a $j$ such that $||R_t - Q_j||_{\rho} < \varepsilon$, so liberal use of the triangle inequality and properties of $Q_j$ shows that $||R_t \cdot_\rho R_t - R_t||_\rho < 3\varepsilon + 2 \varepsilon^2$.
\beu
R_0 = Q \oplus 0 \oplus I \hspace{1 cm} R_t = R_t^* \hspace{1 cm} ||R_t \cdot_\rho R_t - R_t||_\rho < 3\varepsilon + 2 \varepsilon^2 \hspace{1 cm} R_1 = I \oplus 0
\eeu

Write $R_t = \frac{1}{2}I + \frac{1}{2}F_t$, so $F_t$ is self-adjoint and $||F_t \cdot_\rho F_t - 1||_\rho < 12\varepsilon + 8\varepsilon^2 < 1$. This implies that $F_t$ is invertible, so normalize $F_t$ to a self-adjoint unitary (under $\cdot_\rho$) to produce $F_t \cdot_\rho |F_t|^{-1_\rho}$ and the projection $\widetilde{R_t} = \frac{1}{2}I + \frac{1}{2}F_t \cdot_\rho |F_t|^{-1_\rho}$. Note that this normalization process does nothing to $R_1$ and respects the summands of $R_0$: $\widetilde{R_0} = \widetilde{Q} \oplus 0 \oplus I$ where $\widetilde{Q}$ is a projection obtained $Q$ in the $(2m + 1)2^{n}$-dimensional matrix algebra by similar means. Moreover, since $Q = \frac{1}{2}I + \frac{1}{2}F$ satisfies $\mathfrak{a}(Q) = I - Q$, $F$ is a self-adjoint odd invertible, so $F \cdot_\rho |F|^{-1_\rho}$ is a self-adjoint odd unitary, and $\mathfrak{a}(\widetilde{Q}) = I - \widetilde{Q}$. Finally, the path $\widetilde{R_t}$ is a path of projections in $B_\rho \cong \fR^{2n}_\rho$ connecting $\widetilde{Q} \oplus 0 \oplus I$ to $I \oplus 0$, and $\widetilde{Q} \in M_{(2m+1)2^{n}}(B_\rho) = M_{(2m+1)2^n}(\fR^{2n}_\rho)$ is in the $K_0$ subgroup generated by trivial projections. This means that assuming the theorem fails for a single parameter matrix $\omega$ implies that the theorem fails for all $\rho$ that are sufficiently close to $\omega$ and differ from $\rho$ in a prescribed pair of conjugate entries. We may select $\rho$ such that this pair of conjugate entries consists of roots of unity of odd order, and then repeat the argument starting with $\rho$ and another pair of conjugate entries. In finitely many iterations, it will follow that the theorem fails for a parameter matrix $\mu$ which has odd order roots of unity in every entry. This is a contradiction by the following argument.

Suppose $\mu$ is a parameter matrix with an odd order root of unity in each entry such that the theorem fails for a projection $P \in M_{(2m+1)2^n}(\fR^{2n}_\mu)$, and consider $\fR^{2n} = \Sigma^\alpha C(\S^{2n - 1})$. There are unitary matrices $V_1, \ldots, V_n \in U_{2q+1}(\C)$ such that $V_k V_j = \mu_{jk} V_j V_k$, so define $B: \fR^{2n}_\mu \mapsto M_{2q + 1}(\fR^{2n})$ as follows.
\beu
B: z_j \mapsto z_j V_j \hspace{1 in} B: x \mapsto x I_{2q+1}
\eeu
The $*$-homomorphism $B$ exists because the desired images satisfy the relations defining $\fR^{2n}_\mu$, as all noncommutativity information among $z_1, \ldots, z_n$ is pushed to the matrices $V_j$. Further, $B$ is equivariant for the antipodal map, as the odd generators are sent to matrices with odd entries. So, if $P \in M_{(2m+1)2^n}(\fR^{2n}_\mu)$ satisfies $\mathfrak{a}(P) = I - P$ and $[P]_{K_0}$ is in the subgroup generated by trivial projections, then the same properties apply to $B(P) \in M_{(2q + 1)(2m + 1)2^n}(\fR^{2n})$, contradicting Theorem \ref{thm:K0BUcomm-1}.
\end{proof}

\begin{cor}
Suppose $\Phi: \fR^{2n}_\omega \to \fR^{2n}_\rho$ is a unital $*$-homomorphism that is equivariant for the antipodal map. If $K_0 \cong \Z \oplus \Z$ is such that the first summand is generated by trivial projections, then the $K_0$ map induced by $\Phi$ is nontrivial on the second summand.
\end{cor}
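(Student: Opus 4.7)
The plan is to mirror the strategy used in Corollary \ref{cor:eventhetaBU} for the $\theta$-deformed even spheres, leveraging the preceding theorem about $(2m+1)2^n$-dimensional projections in $M_{(2m+1)2^n}(\fR^{2n}_\rho)$ satisfying $\mathfrak{a}(P) = I - P$. Essentially, equivariance funnels a distinguished test projection from the domain into the codomain while preserving its antipodal symmetry, and then the theorem does all of the work.

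First, I would invoke the distinguished projection $P'_\omega(n) \in M_{2^n}(\fR^{2n}_\omega)$ from (\ref{eq:distinguishedprojection}). Using that the entries of $Z_\omega(n)$ are $*$-monomials in $z_1, \ldots, z_n$, hence odd with respect to the antipodal action, together with the fact that $x$ is odd, one verifies that
\beu
\mathfrak{a}(P'_\omega(n)) = \tfrac{1}{2}I_{2^n} + \tfrac{1}{2}\begin{pmatrix} -x I_{2^{n-1}} & -Z_\omega(n) \\ -Z_\omega(n)^* & -x I_{2^{n-1}} \end{pmatrix} = I_{2^n} - P'_\omega(n).
\eeu
This is the verification already emphasized in the discussion following (\ref{eq:distinguishedprojection}). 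Its dimension $2^n$ is of the form $(2m+1)2^n$ with $m = 0$, matching the hypothesis of the preceding theorem.

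Next, I would apply $\Phi$ entrywise to obtain $\Phi(P'_\omega(n)) \in M_{2^n}(\fR^{2n}_\rho)$. Since $\Phi$ is a unital $*$-homomorphism, $\Phi(P'_\omega(n))$ is again a projection; since $\Phi$ is equivariant with respect to the antipodal actions on both algebras, applying $\Phi$ to the identity $\mathfrak{a}(P'_\omega(n)) = I - P'_\omega(n)$ yields $\mathfrak{a}(\Phi(P'_\omega(n))) = I - \Phi(P'_\omega(n))$. Thus $\Phi(P'_\omega(n))$ satisfies both the dimension and antipodal symmetry hypotheses of the preceding theorem, so $[\Phi(P'_\omega(n))]_{K_0}$ is not contained in the subgroup of $K_0(\fR^{2n}_\rho) \cong \Z \oplus \Z$ generated by trivial projections. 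Since the induced $K_0$ map sends $[P'_\omega(n)]_{K_0}$ to $[\Phi(P'_\omega(n))]_{K_0}$, it must be nontrivial on the second summand.

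There is no real obstacle here beyond what has already been established; the corollary is essentially a packaging result. The only potential subtlety is making sure the chosen identification of $K_0$ with $\Z \oplus \Z$ in the codomain really does place trivial projections in the first summand (so that ``nontrivial on the second summand'' is the correct translation of ``not in the trivial-projection subgroup''), but this is precisely the hypothesis of the corollary.
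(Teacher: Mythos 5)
Your proposal matches the paper's proof exactly: both take the distinguished projection $P^\prime_\omega(n)$ of (\ref{eq:distinguishedprojection}), use equivariance of $\Phi$ to transfer the identity $\mathfrak{a}(P^\prime_\omega(n)) = I - P^\prime_\omega(n)$ to the image, and then apply the preceding theorem (with $m = 0$) to conclude that $[\Phi(P^\prime_\omega(n))]_{K_0}$ lies outside the trivial-projection subgroup. The additional verifications you spell out (oddness of the $*$-monomial entries of $Z_\omega(n)$, the dimension count) are correct and already implicit in the paper's terser argument.
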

\begin{proof}
The image $\Phi(P^\prime_\omega(n))$, where $P^\prime_\omega(n)$ is as in (\ref{eq:distinguishedprojection}), is a $2^n \times 2^n$ projection that satisfies $\alpha(\Phi(P^\prime_\omega(n))) = I - \Phi(P^\prime_\omega(n))$. Its $K_0$ class is therefore not in the subgroup generated by trivial projections.
\end{proof}
\noindent \textit{Remark.} Because the $\theta$-deformed spheres have the same nontriviality statement in $K_0$, the domain or codomain could be replaced by a $\theta$-deformed sphere of dimension $2n$.

A curious aspect of these proofs is that they discuss unitaries satisfying $\alpha(M) = M^*$ over $C(\S^{2n - 1}_\omega)$, a sphere one dimension lower than the original, whereas the argument for $C(\S^{2n}_\omega)$ worked by pushing up one dimension and focusing on odd unitaries. In both cases, the alternative method of proof appears to have \lq\lq insurmountable\rq\rq \hspace{0 pt} road blocks, where one switch of sign nullifies an entire argument. Nevertheless, we have seen that a noncommutative Borsuk-Ulam theorem does hold in both cases.

\section{Acknowledgments}
This work was partially supported by NSF grants DMS 1300280 and DMS 1363250. Moreover, I am grateful to my advisors John McCarthy and Xiang Tang at Washington University in St. Louis for their continued support.

\bibliography{anti}

\def\polhk#1{\setbox0=\hbox{#1}{\ooalign{\hidewidth
  \lower1.5ex\hbox{`}\hidewidth\crcr\unhbox0}}}
  \def\polhk#1{\setbox0=\hbox{#1}{\ooalign{\hidewidth
  \lower1.5ex\hbox{`}\hidewidth\crcr\unhbox0}}}
  \def\polhk#1{\setbox0=\hbox{#1}{\ooalign{\hidewidth
  \lower1.5ex\hbox{`}\hidewidth\crcr\unhbox0}}} \def\cprime{$'$}
\begin{thebibliography}{10}

\bibitem{at67}
{M}.~{F}. {A}tiyah.
\newblock {\em {$K$}-{T}heory}.
\newblock {L}ecture notes by {D}. {W}. {A}nderson. {W}. {A}. {B}enjamin,
  {I}nc., {N}ew {Y}ork-{A}msterdam, 1967.

\bibitem{ba15}
{P}aul~{F}. {B}aum, {L}udwik {D}{\polhk{a}}browski, and {P}iotr~{M.} {H}ajac.
\newblock {N}oncommutative {B}orsuk-{U}lam-type conjectures. arxiv:1502.05756.

\bibitem{ch16}
Alexandru {C}hirvasitu and {B}enjamin {P}asser.
\newblock {C}ompact {G}roup {A}ctions on {T}opological and {N}oncommutative
  {J}oins. arxiv:1604.02173.

\bibitem{co01}
{A}lain {C}onnes and {G}iovanni {L}andi.
\newblock {N}oncommutative {M}anifolds, the {I}nstanton {A}lgebra and
  {I}sospectral {D}eformations.
\newblock {\em {C}omm. {M}ath. {P}hys.}, 221(1):141--159, 2001.

\bibitem{da15}
{L}udwik {D}{\polhk{a}}browski.
\newblock {T}owards a noncommutative {B}rouwer fixed-point theorem.
  arxiv:1504.03588.

\bibitem{da02}
Ludwik D{\polhk{a}}browski and Giovanni Landi.
\newblock Instanton algebras and quantum 4-spheres.
\newblock {\em Differential Geom. Appl.}, 16(3):277--284, 2002.

\bibitem{na97}
{T}. {N}atsume and {C}.~{L}. {O}lsen.
\newblock {T}oeplitz {O}perators on {N}oncommutative {S}pheres and an {I}ndex
  {T}heorem.
\newblock {\em {I}ndiana {U}niv. {M}ath. {J}.}, 46(4):1055--1112, 1997.

\bibitem{pa15a}
{B}enjamin {P}asser.
\newblock {A} {N}oncommutative {B}orsuk-{U}lam {T}heorem for {N}atsume-{O}lsen
  {S}pheres. arxiv:1503.01822. {T}o appear in {J}ournal of {O}perator {T}heory.

\bibitem{pa15b}
{B}enjamin {P}asser.
\newblock {S}aturated {A}ctions on {$C^*$}-algebra {S}uspensions and {J}oins by
  {F}inite {C}yclic {G}roups. arxiv:1510.04100.

\bibitem{pe13}
{M}ira~{A}. {P}eterka.
\newblock {F}initely-{G}enerated {P}rojective {M}odules over the
  {$\theta$}-{D}eformed 4-{S}phere.
\newblock {\em {C}omm. {M}ath. {P}hys.}, 321(3):577--603, 2013.

\bibitem{ri93}
{M}arc~{A}. {R}ieffel.
\newblock {D}eformation {Q}uantization for {A}ctions of {${\bf R}^d$}.
\newblock {\em {M}em. {A}mer. {M}ath. {S}oc.}, 106(506):x+93, 1993.

\bibitem{ri93b}
{M}arc~{A}. {R}ieffel.
\newblock {$K$}-groups of {$C^*$}-{A}lgebras {D}eformed by {A}ctions of {${\bf
  R}^d$}.
\newblock {\em {J}. {F}unct. {A}nal.}, 116(1):199--214, 1993.

\bibitem{ta12}
{A}li {T}aghavi.
\newblock {A} {B}anach {A}lgebraic {A}pproach to the {B}orsuk-{U}lam {T}heorem.
\newblock {\em {A}bstr. {A}ppl. {A}nal.}, pages Art. ID 729745, 11, 2012.

\bibitem{wi07}
Dana~P. Williams.
\newblock {\em {C}rossed {P}roducts of {$C{^\ast}$}-{A}lgebras}, volume 134 of
  {\em Mathematical Surveys and Monographs}.
\newblock American Mathematical Society, Providence, RI, 2007.

\bibitem{ya13}
{M}akoto {Y}amashita.
\newblock {E}quivariant {C}omparison of {Q}uantum {H}omogeneous {S}paces.
\newblock {\em {C}omm. {M}ath. {P}hys.}, 317(3):593--614, 2013.

\end{thebibliography}
\end{document}